\theoremstyle{plain}
    \newtheorem{thm}{Theorem}[section]
    \newtheorem{prop}[thm]{Proposition}
    \newtheorem{lemma}[thm]{Lemma}
    \newtheorem{cor}[thm]{Corollary}
\theoremstyle{definition}
\theoremstyle{remark}
\newtheorem*{namedtheorem}{\theoremname}
\newcommand{\theoremname}{testing}
\newcommand{\stk}[1]{\stackrel{#1}{\longrightarrow}}
\newcommand{\ftwo}{\mathbb{F}_2}
\newcommand{\rar}{\ensuremath{\rightarrow}}
\newcommand{\n}{\noindent}
\DeclareMathOperator{\Hom}{Hom}
\DeclareMathOperator{\Ker}{Ker}
\newcommand{\Image}{\text{Image}}
\begin{document}

\title{Representations of The miraculous Klein group}
\date{\today}


\author{Sunil Chebolu}
\address{Department of Mathematics \\
Illinois State University \\
Normal, IL 61761  USA}
 \email{schebol@ilstu.edu}

\author{J\'{a}n Min\'{a}c}
\address{Department of Mathematics \\
University of Western Ontario \\
London, ON, N6A 5B7, Canada}
 \email{minac@uwo.ca}

\keywords{Klein four group, indecomposable modules, modular representations, Auslander-Reiten sequence, Heller shifts}
\subjclass[2000]{}

\begin{abstract} The Klein group contains only four elements. Nevertheless this little group contains a number of remarkable entry points to 
current highways of modern representation theory of groups. In this paper, we shall describe all possible ways in which the Klein group can 
act on vector spaces over a field of two elements.  These are called representations of the Klein group. This description involves some powerful visual methods of representation theory which builds on the work  of generations of mathematicians starting roughly with the work of K. Weiestrass. We also discuss some applications to properties of duality and Heller shifts of the representations of the Klein group.
\end{abstract}

\maketitle



\section{Introduction}
Consider the familiar complex plane $\mathbb{C} = \{ x+ iy \, | \, x, y \text{ are real numbers} \}$ with two reflections
$\sigma$ and $\tau$ in the standard axes $X$ and $Y$ respectively.  Precisely, we have
\begin{eqnarray*}
\sigma(x+iy) & =  & x - iy, \text{ and } \\
 \tau(x+iy) &  = &  -x+iy.
 \end{eqnarray*}
 Thus, $\sigma$ is the complex conjugation and $\tau$ is like a real brother of $\sigma$.
 Note that if we apply $\sigma$ or $\tau$ twice, we get the identity map: $\sigma^{2} = 1 = \tau^{2}$.
 Also, we see that  $\sigma \tau = \tau \sigma = - 1$. Geometrically, the maps $\sigma \tau$
and $\tau \sigma$ are rotations by $180$ degrees in the complex plane. The set of maps
\[\{1, \sigma, \tau, \sigma \tau\}\]
forms a group under composition and is called the Klein four group or just Klein group, often denoted by $V_{4}$.
One would guess that the letter $V$ here is a sign of victory but  the reason is that ``\emph{Vier}'' in German means ``four.''
 ``Klein'' in German also means ``small'' and indeed Klein group $V_{4}$ having only four elements is quite small.
It is an absolutely amazing fact that this small and ostensibly innocent group contains  remarkable richness and that important mathematics can be developed by just studying this one group. The world's smallest field is $\ftwo = \{ 0, 1\}$, and one can think
of this as a toy model of complex numbers.  The problem to be investigated in this paper is the following: What are all the finite
dimensional representations of $V_{4}$ over $\ftwo$? That is, can one describe all possible actions of the group $V_{4}$ on finite dimensional vector spaces  $W$ over $\ftwo$. Although, we work over an arbitrary field of characteristic two, not much is lost if the reader assumes through out that the ground field $k$ is $\ftwo$. The reason for restricting to fields of characteristic $2$ is due to the fact that when the characteristic of the ground field $k$ is either zero or odd, the finite dimensional representations $W$ of $V_{4}$ have a very simple nature. Namely, $W$ is a sum of one dimensional representations. One each of these one-dimensional subspaces,  the generators $\sigma$ and $\tau$ act as multiplication by $1$ or  $-1$. We therefore  stick with fields of characteristic  $2$. This bring us to the world of  modular representations.  (That is, the characteristic of the field is a positive divisor of the order of the group.)

Note that $V_4$  is a product of two cyclic groups of order two. In terms
of generators and relations, $V_4$ has the following presentation.
\[ V_4 =   \langle \sigma, \tau \; | \; \sigma^2 = \tau^2 = 1,  \sigma \tau = \tau \sigma  \rangle .\]
The group algebra $kV_4$ is then isomorphic to
\[k[a , b]/(a^2, b^2),\]
where $a$ corresponds to  $1+ \sigma$ and $b$ to  $1+\tau$. We define an ideal $U$ of the $kV_4$ as the ideal generated by $a$ and $b$. This is an extremely important ideal called the augmentation ideal of our group ring. Sometime it will be convenient to divide $kV_4$ by ideal generated by $ab$. This simply amounts adding further relation $ab = 0$. The reason for this is that often $ab$ acts on our vector spaces as $0$ and therefore why not simplify our ring even further and add the relation $ab= 0$? We still call the image of $U$ in this new ring $U$ as we do not want to make our notation too complicated.  In this paper we present a
rather accessible proof of the well-known classification of all the finite dimensional  representations of $V_{4}$, or equivalently, of the finitely  generated indecomposable $kV_{4}$-modules.
These are also known as the modular representations of $V_{4}$. Note that a $V_{4}$ representation where $ab$ acts as zero can be viewed as a finite
dimensional $k$-linear space equipped with a pair commuting linear maps
$a$ and $b$ both of which square to zero.

Having explained what a representation of $V_{4}$ is, the following two questions have to be answered.
\begin{enumerate}
\item Why do we care about the representations of $V_{4}$?
\item What is unique about our approach?
\end{enumerate}

In answer to the first question, first note that groups act naturally on various algebraic objects including vector spaces, rings, algebraic
varieties and topological spaces.  These actions tend to be quite complex in general. Therefore it is important to find simple pieces of
this action and find ways to glue these pieces together to reconstruct the original action. Often this is related to other invariants of the
group or our given representation like cohomology groups and support varieties.
Amazingly, this goal in modular representation theory turns out to be exceedingly difficult.
 It turns out that besides the cyclic groups whose representations are very easily understood, Klein four group is one of the very few  (other groups are the dihedral groups) interesting yet non-trivial examples for which representation theorists are able to completely classify all the finite dimensional modular representations. There is a lot to be learned
by studying the representation theory of this one group and it goes to tell how complex the
study of modular representations can be for an arbitrary group.

Now we turn to the second question. Although the classification of the finite dimensional representations of $V_{4}$ is well-known
and many proofs can be found in the literature, we could not find a proof to our heart's content. This is what motivated us to write up one -- one that is transparent and which takes a minimal background.  Furthermore, our approach is diagrammatic, so the reader
can see what is happening through pictures. These methods, besides making the statements of
theorems and proofs elegant and conceptual, give a better
insight into the subject. We mostly follow Benson's approach  \cite{ben-1}  but we approach some parts of his proof from a different point of view and simplify  them and in particular we make our
proof accessible for a general reader. One ingredient that is new in our approach is Auslander-Reiten sequences which will be introduced later in the paper.

The subject of classifying the indecomposable representations of the Klein group has a long and rich history that can be traced all the way back to V. A. Ba\v{s}ev \cite{basev}, a student of I.R. \v{S}afarevi\v{c}, who observed that an old result of L. Kronecker on pairs of matrices can be used effectively in the classification, but over algebraically closed fields.  This result of L. Kronecker on pairs of matrices was actually a completion of the work of K. Weierstrass. Then later on
I. M. Gelfand and V. Ponomarev \cite{Gel-Pon} observed in their analysis of the representations of the
Lorentz group that quiver techniques were quite useful and they both knew that   G.Szekeres had a result in this direction. However,  they did
not know  enough details about Szekeres's techniques and therefore they invented their new innovative and influencial quiver method  which is influenced by Maclane's notion of relations -- a generalization of a linear map.   In  \cite{hell-rei} A. Heller and I. Reiner provided another nice approach to the classification where they also worked over fields that are not necessarily algebraically closed.  Finally D. Benson \cite{ben-1}
wrote a modern treatment of the classification of the indecomposable representations of the Klein group in which he combined some of the crucial ideas in the works of the aforementioned people. The diagrammatic methods in our paper are inspired by S. B. Conlon  who  introduced these in \cite{conlon}.
It is quite remarkable that a complete understanding of an innocent looking group on four elements would take the works of some of the great minds of the 19th, 20th, and 21st centuries.

 Before going further, we remind the reader some basis facts and terminology. We refer the reader to Carlson lecture notes for basic representation theory \cite{carlson-modulesandgroupalgebras}. In the category of modules over a the Klein group (or more generally, over a $p$-group), the three terms
``injective", ``projective" and ``free" are synonymous.
Given a $V_{4}$-module $M$, its Heller shift $\Omega( M)$ is defined to
be the kernel of a minimal projective cover of $M$.  It can be shown that minimal projective covers are
unique up to isomorphism and from that it follows that $\Omega(M)$ is well-defined. Inductively one defines $\Omega^n (M)$ to be
$\Omega( \Omega^{n-1} M)$. Similarly, $\Omega^{-1} M$ is defined to
be the cokernel of an injective envelope of $M$, and $\Omega^{-n}
(M)$ to be $\Omega^{-1}( \Omega^{-n+1} M)$. Again one can shown that these
are well-defined modules. The modules
$\Omega^i M$ are also known as the syzygies of $M$.
By the classical Krull-Remak-Schmidt theorem, one knows that every
representation of a finite group decomposes as a direct sum of
indecomposable ones. Thus it suffices to classify the indecomposable
representations.

Advice for the novice: some arguments in our paper are only sketched and some notions maybe still unfamiliar for a novice. If that is the case, we advice readers  to skip these parts on the first reading as they may became more clear later on and they most likely will not influence the basic understanding of the key ideas.
The main point of this article is to provide overview of the remarkable proof of classification of representations of Klein group $V_4$ with  appreciation of the works of number of people and to show that this proof open doors to study modern
group representation where Auslander-Reiten sequences play increasingly important role. We hope that after reading our article a reader will read more texts in the references and possibly go on to further exciting heights in group representation theory.

\section{Indecomposable Representations of Klein's four group}

We list  all the indecomposable representations of $V_4$ below. Note that these are just the finitely generated modules over the group algebra
\[kV_{4} \cong k[a, b]/(a^{2}, b^{2})\]
which cannot be written as a sum of strictly smaller modules (much the same way prime numbers cannot be written as product of
smaller numbers).
 Since we take a diagrammatic approach, we first  explain the diagrams that follow.
Each bullet represents a one dimensional $k$ vector space, a
southwest arrow "$\swarrow$" connecting two bullets corresponds to
the action of $a$ and maps one bullet to the other in the indicated
direction, and similarly the south east arrows "$\searrow$"
correspond to the action of $b$. If no arrow emanates from a bullet
in given direction, then the corresponding linear action is
understood to be zero.

\begin{thm}\emph{(Kronecker, Weierstrass, Basev, Gelfand,  Ponomarev, Conlon, Heller, Reiner, Benson)} \, \cite{Gel-Pon, conlon, hell-rei, basev, ben-1} Let $k$ be a field of characteristic $2$. Every isomorphism class of an indecomposable $V_4$ representation over $k$ is
precisely one of the following.
\begin{enumerate}
  \item   The projective indecomposable module $kV_4$ of
  dimension 4.
   \[
    \xymatrix@=2em{
     & \bullet \ar[dl]_a \ar[dr]^b & \\
     \bullet \ar[dr]&  & \bullet \ar[dl]\\
     & \bullet &
    }
   \]
  \item   The (non-projective) indecomposable even dimensional modules:
   \begin{enumerate}
     \item  For each even dimension $2n$ and an indecomposable rational canonical from corresponding
      to the power of an irreducible monic polynomial $f(x)^l = \sum_{i=0}^n \theta_i x^i$, $(\theta_n = 1)$ there is an indecomposable
     representation given by
\[
\xymatrix@=1.5em{
    &  & \overset{g_{n-1}}{\bullet} \ar@{..>}[d] \ar[dr]& & \overset{g_{n-2}}{\bullet}\ar[dl]^a \ar[dr] &  & \overset{g_{n-3}}{\bullet} \ar[dl]  & \bullet  \hspace{1 mm}
      \bullet \hspace{1 mm} \bullet
      & & \overset{g_0}{\bullet} \ar[dl] \ar[dr] \\
    &  & & \underset{f_{n-1}}{\bullet} & & \underset{f_{n-2}}{\bullet} &  & &  \underset{f_1}{\bullet} & &
      \underset{f_0}{\bullet}
    }
\]
where $a(g_{n-1}) = \sum_{i = 0}^{n-1} \theta_i f_i$, as represented by the vertical dotted arrow emanating from $g_{n-1}$ above.
\item For each even dimension $2n$ there is an indecomposable
representation given by
\[
     \xymatrix@=2em{
& & \bullet \ar[dr]^b \ar[dl]_a & & \bullet \ar[dl] \ar[dr] & & &
     \bullet \ar[dr] & & \bullet \ar[dl]   \\
 &     \bullet &  & \bullet & & \bullet & \bullet  \hspace{1 mm}
      \bullet \hspace{1 mm} \bullet & &  \bullet &
}
\]
\end{enumerate}
\item The (non-projective) indecomposable odd dimensional modules:
\begin{enumerate}
\item The trivial representation $k$.
\[ \bullet \]
\item For each odd dimension $2n+1$ greater than one, there is an indecomposable
    representation given by
\[
\xymatrix@=2em{
    &  \bullet \ar[dr]_b & & \bullet\ar[dl]^a \ar[dr] &  & \bullet \ar[dl]  & \bullet  \hspace{1 mm}
      \bullet \hspace{1 mm} \bullet & \bullet \ar[dr]
      & & \bullet \ar[dl] \\
    &   & \bullet & & \bullet &  & &  & \bullet &
    }
\]
\item For each odd dimension $2n+1$ greater than one, there is an indecomposable
representation given by
     \[
     \xymatrix@=2em{
& & \bullet \ar[dr]^b \ar[dl]_a & & \bullet \ar[dl] \ar[dr] & &
      & & \bullet \ar[dl] \ar[dr] &  \\
 &    \bullet &  & \bullet & & \bullet & \bullet  \hspace{1 mm}
      \bullet \hspace{1 mm} \bullet  &  \bullet & & \bullet
}
\]
  \end{enumerate}
\end{enumerate}

\end{thm}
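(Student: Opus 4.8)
\emph{Overall strategy and Step~1 (splitting off the projective part).} The plan is the classical one: strip away the free summand, reduce what remains to Kronecker's problem on pairs of matrices, invoke (or reprove, following Weierstrass and Kronecker) the normal form of a matrix pencil, and finally read each normal-form block back as a diagram of bullets and arrows. Throughout we use the fact recalled in the introduction that over $kV_4$ the classes of free, projective and injective modules coincide. For Step~1, let $M$ be any finite-dimensional $kV_4$-module; I claim $M\cong F\oplus N$ with $F$ free and $ab\cdot N=0$. Choose $m_1,\dots,m_t\in M$ whose images $ab\,m_1,\dots,ab\,m_t$ form a $k$-basis of $ab\,M$ and let $F$ be the submodule they generate; a short linear-algebra check using only $a^2=b^2=0$ and $ab=ba$ shows that the $4t$ elements $m_j,am_j,bm_j,ab\,m_j$ are linearly independent, so $F$ is free on $m_1,\dots,m_t$. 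Being free, $F$ is injective, hence a direct summand, $M=F\oplus N$; and since $ab\,F=ab\,M$, the complement satisfies $ab\,N=0$. This produces case~(1), the unique indecomposable free module $kV_4$, and reduces everything else to classifying indecomposable modules over the truncated algebra $\Lambda:=kV_4/(ab)=k[a,b]/(a^2,b^2,ab)$.

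\emph{Step~2 (the Kronecker dictionary).} Since $(\mathrm{rad}\,\Lambda)^2=0$, for a $\Lambda$-module $M$ the operators $a$ and $b$ annihilate $W:=\mathrm{rad}\,M=aM+bM$ and carry $M$ into $W$, so they induce linear maps $\alpha,\beta\colon V\to W$ with $V:=M/\mathrm{rad}\,M$; choosing a vector-space complement to $W$ exhibits $M$ as $V\oplus W$ with $a,b$ acting only through $\alpha,\beta$, so $M$ is literally recorded by the pair $(\alpha,\beta)$ --- there is no extension class to worry about, a simplification special to the relation $\mathrm{rad}^2=0$. One then checks that a $\Lambda$-homomorphism carries $\mathrm{rad}$ into $\mathrm{rad}$ and intertwines the induced pairs, that an isomorphism of modules amounts to an equivalence $(\alpha,\beta)\sim(P\alpha Q^{-1},P\beta Q^{-1})$ with $P\in\mathrm{GL}(W)$ and $Q\in\mathrm{GL}(V)$, and that $M$ is indecomposable precisely when the pair is. Thus, apart from the single pair $(0\rightrightarrows k)$ --- which never occurs as $(\,\mathrm{top}\,,\,\mathrm{rad}\,)$ of a module, while $(k\rightrightarrows 0)$ does and yields the trivial module --- classifying the non-free indecomposable representations of $V_4$ is the same as classifying the indecomposable representations of the Kronecker quiver $\bullet\rightrightarrows\bullet$, i.e. pencils $\alpha-x\beta$ up to strict equivalence. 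At this point I would recall or reprove the Weierstrass--Kronecker normal form: every pencil is a direct sum of (i) a regular part, which on passing to a suitable member becomes an ordinary operator and so breaks into companion blocks associated to powers $f(x)^{\ell}$ of monic irreducible polynomials, together with the reciprocal block ``at $\infty$'' (where the roles of $\alpha$ and $\beta$ are reversed), and (ii) two families of singular rectangular blocks, of sizes $(n,n+1)$ and $(n+1,n)$.

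\emph{Step~3 (back to diagrams, and the role of Heller shifts and almost split sequences).} Reading each normal-form block as a $\Lambda$-module and drawing its $a$- and $b$-arrows recovers exactly the pictures in the statement: the regular finite blocks give case~(2a), the vertical dotted arrow $a(g_{n-1})=\sum_{i=0}^{n-1}\theta_i f_i$ being precisely the companion relation $x^{n}=\sum_{i=0}^{n-1}\theta_i x^{i}$ for $f(x)^{\ell}$ in characteristic $2$; the block at $\infty$ gives case~(2b); the $1$-dimensional degenerate block $(k\rightrightarrows 0)$ gives the trivial module~(3a); and the two singular families give the odd-dimensional strings (3b) and (3c). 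That each listed module is genuinely indecomposable is the easy half: one computes the endomorphism ring and sees it is local, or simply observes that the underlying picture cannot be split into two sub-pictures. Here Heller shifts and Auslander--Reiten sequences serve as the organizing tool for the completeness half: $\Omega$ permutes the non-projective indecomposables, so that the odd-dimensional ones form the single $\Omega$-orbit $\{\Omega^{n}k\}_{n\in\ints}$ and the even-dimensional ones fall into $\Omega^{2}$-stable families indexed by the points of the parameter line; and the middle term of the almost split sequence ending at any indecomposable is a sum of indecomposables already on the list at smaller or neighbouring dimension, so an induction anchored at the few smallest modules exhausts the list --- a purely diagrammatic proof of completeness, running in parallel to the pencil classification.

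\emph{Main obstacle.} I expect the core difficulty to be the Weierstrass--Kronecker classification itself, and within it the treatment of the singular part of a pencil and the proof that it is a direct sum of the two rectangular families --- the part that genuinely goes beyond the rational canonical form of a single operator, and which must be carried out over an arbitrary field of characteristic $2$ rather than merely an algebraically closed one. The secondary difficulty is the bookkeeping afterwards: checking that the normal-form blocks translate into exactly the diagrams listed, with no repetition (the $f(x)=x$ instance of (2a) versus (2b); the degenerate block versus (3a)) and nothing omitted, so that (2a)--(2b) account for every even dimension and $k$ together with its Heller shifts $\Omega^{n}k$ account for every odd dimension.
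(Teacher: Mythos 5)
Your Steps 1 and 2 are sound and essentially coincide with the paper's reduction: splitting off the free part (over a $p$-group projective $=$ free $=$ injective, and a module with no free summand is killed by $ab$) and translating modules over $k[a,b]/(a^2,b^2,ab)$ into pairs of linear maps is exactly Proposition \ref{prop1}. The genuine gap is what comes after: the entire content of the theorem is the completeness of the list, and your proposal does not supply it. You offer two substitutes. The first is to ``recall or reprove'' the Weierstrass--Kronecker normal form for pencils over an arbitrary field; but that normal form, in particular the decomposition of the singular part into the two rectangular families, \emph{is} the statement being proved (in the quiver dictionary), and you explicitly flag it as your main obstacle rather than proving it. The second substitute, the ``purely diagrammatic proof of completeness'' by induction over almost split sequences, would fail as described: asserting that the odd-dimensional indecomposables form the single $\Omega$-orbit of $k$ is the conclusion, not an available hypothesis; the even-dimensional indecomposables lie in homogeneous tubes of the stable Auslander--Reiten quiver that are \emph{not} connected to the component containing $k$ and its syzygies, so no induction ``anchored at the few smallest modules'' and propagated through middle terms ever reaches them; and in any case knitting the AR quiver outward from known modules can only describe the components you start in --- it cannot rule out indecomposables lying in components you never see, which is precisely what completeness requires. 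Moreover, writing down the middle terms of the relevant almost split sequences already presupposes structural information equivalent to a large part of the classification.

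It is worth contrasting this with how the paper actually uses these tools, because the difference is exactly where your gap sits. The paper never invokes the full pencil classification. Instead it proves Lemma \ref{lemma1} (the module $M=[V_1,V_2;a,b]$ contains a copy of $\Omega^l(k)$ if and only if $\det(a+\lambda b)=0$ over $k[\lambda]$, and dually for quotients), Lemma \ref{lemma2} (if $l$ is minimal with $\Omega^l(k)\subseteq M$, then $\Omega^l(k)$ is a direct summand --- this is the \emph{only} place almost split sequences enter, and only the explicit sequences ending in $\Omega^l(k)$ are needed), and Lemma \ref{lemma3} (identification of $\Omega^{\pm n}(k)$ with the zig-zag diagrams). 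These three lemmas dispose of the ``singular pencil'' phenomena: an indecomposable with $\det(a+\lambda b)=0$, or with $\dim V_1\neq\dim V_2$, must be $\Omega^{\pm l}(k)$, giving 3(b), 3(c) and excluding singular behaviour in even dimensions. What remains is the regular case, which reduces to the rational canonical form of the single operator $b^{-1}a$ (giving 2(a)), together with the change-of-variable $a\mapsto a+\lambda_0 b$ and a descent argument over finite fields to handle $\det b=0$ (giving 2(b)). If you want to salvage your route, you must either carry out the Kronecker pencil theory in full over an arbitrary field of characteristic $2$ (singular blocks included), or replace your Step 3 by an argument of the above type; as written, the completeness half of the theorem is missing.
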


 The reader may decide to make a pleasant check that the above diagrams are indeed representations of $V_{4}$.   For each $V_{4}$-module $M$, one can 
 define the dual $V_{4}$-module $M^{*}$, where $M^{*}$ is the dual $k$-vector space of  $M$, and a group element $\sigma$ of $V_{4}$ acts on $f$ in $M$ via the rule $\sigma f(m) = f(\sigma^{-1} m)$.
 Then, as a fun exercise we ask the reader
 to verify that the diagrams in 3(a) and 3(b) are dual to
each other. This will help the reader to get acquainted with some of
the diagrammatic methods that will appear later on.  We now begin by
proving the easy part of the theorem.

\begin{lemma} \label{lemma0} All representations of $V_4$ that appear in the above
theorem are indecomposable and pair-wise non-isomorphic.
\end{lemma}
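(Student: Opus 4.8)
The plan is to treat indecomposability and non-isomorphism separately, reducing both to a handful of canonical invariants that can be read directly off the diagrams. Recall that for a finite-dimensional module $M$, being indecomposable is equivalent to $\mathrm{End}_{kV_4}(M)$ being a local ring, i.e. having no idempotents other than $0$ and $1$. The projective module $kV_4$ of case $(1)$ is indecomposable because $kV_4$ is itself a local ring (the group algebra of a $2$-group over a field of characteristic $2$ has nilpotent augmentation ideal, hence is local), so its endomorphism ring is local; it is moreover the only module in the list on which $ab$ acts nontrivially, which already isolates it from everything else. From now on $M$ is one of the remaining modules, so $ab$ acts as $0$; then $\mathrm{rad}\,M = UM = aM+bM$, $\mathrm{rad}^2 M = (ab)M = 0$, $\mathrm{soc}\,M = \{m : am = bm = 0\}$, and the elements $a,b$ induce canonical $k$-linear maps $\bar a,\bar b\colon M/\mathrm{rad}\,M \to \mathrm{soc}\,M$.

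For indecomposability, let $\varphi\in\mathrm{End}_{kV_4}(M)$. It preserves $\mathrm{rad}\,M$ and $\mathrm{soc}\,M$, hence induces $\bar\varphi$ on $M/\mathrm{rad}\,M$ compatibly with $\bar a$ and $\bar b$; and since $\bar\varphi = 0$ forces $\varphi(M)\subseteq\mathrm{rad}\,M$ and therefore $\varphi^2 = 0$, the kernel of $\varphi\mapsto\bar\varphi$ is a nil ideal, so it suffices to control $\bar\varphi$. For the string-type modules — the trivial module $k$, the odd families $3(a)$ and $3(b)$, and the even family $2(b)$ — I would take the source (``top'') bullets as a basis of $M/\mathrm{rad}\,M$ and chase the identities $\varphi(ax) = a\varphi(x)$, $\varphi(bx) = b\varphi(x)$: whenever two top bullets feed, via $a$ and via $b$ respectively, into a common bottom bullet, comparing the two resulting expressions for $\varphi$ of that bottom bullet forces the corresponding diagonal entries of $\bar\varphi$ to agree and an off-diagonal entry to vanish. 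Propagating along the connected zig-zag shows $\bar\varphi$ is a scalar, so $\mathrm{End}_{kV_4}(M) = k\cdot\mathrm{id}_M + N$ with $N$ a nilpotent ideal, hence local. For the even band modules $2(a)$ attached to $f(x)^l$, the picture is arranged so that $\bar b$ is bijective and $S := \bar b^{-1}\bar a$ is the companion matrix of $f(x)^l$; compatibility of $\bar\varphi$ with $\bar a$ and $\bar b$ forces $\bar\varphi S = S\bar\varphi$, so $\bar\varphi \in k[S] \cong k[x]/(f(x)^l)$. The latter is a commutative local ring, a subring of a commutative Artinian local ring is local, and $\varphi\mapsto\bar\varphi$ has nil kernel, so again $\mathrm{End}_{kV_4}(M)$ is local.

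For the non-isomorphism statement, $kV_4$ is already set aside, and among the remaining modules the dimension $\dim_k M$ does almost all of the work: the trivial module is the only one of dimension $1$, and the only remaining coincidences of dimension are between $3(a)$ and $3(b)$ in a common odd dimension $2n+1$, and among the modules of a common even dimension $2n$ — those of family $2(a)$ with $\deg f(x)^l = n$ together with the one in family $2(b)$. The modules $3(a)$ and $3(b)$ are separated by the canonical integer $\dim_k(M/\mathrm{rad}\,M) - \dim_k\mathrm{soc}\,M$, which is $+1$ for $3(a)$ and $-1$ for $3(b)$ (consistent with the duality between $3(a)$ and $3(b)$ mentioned after the theorem). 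In even dimension $2n$, the module of family $2(b)$ is separated from all of family $2(a)$ because its canonical map $\bar b$ fails to be injective (an endpoint of its string lies in the kernel), whereas $\bar b$ is bijective for every module of family $2(a)$. Finally, two modules of $2(a)$ are separated because the characteristic polynomial of the canonical operator $\bar b^{-1}\bar a$ on $M/\mathrm{rad}\,M$ equals $f(x)^l$, and $f(x)^l$ determines the pair $(f,l)$ by unique factorization.

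The conceptual content here is slight; the only real labor, and where I expect all of the friction, is the uniform bookkeeping for arbitrary $n$: extracting the module structure cleanly from the pictures in every parity and endpoint case, and running the arrow-chase that pins $\bar\varphi$ down to a scalar. The three-dimensional module listed in $3(a)$ (the case $2n+1 = 3$) is the prototype: one checks by hand that every endomorphism is a scalar plus a multiple of the nilpotent map collapsing the two top bullets onto the bottom one, and the general case is this same computation carrying more indices.
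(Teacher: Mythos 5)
Your proposal is correct in substance, and on the non-isomorphism half it follows essentially the same route as the paper: the paper isolates $kV_4$ by the nonvanishing of the $ab$-action, separates even from odd families by dimension, separates the two odd zig-zag families (items 3(b) and 3(c)) by the dimension of the invariant submodule ($n$ versus $n+1$), separates 2(b) from 2(a) by the dimension of $\ker b$ ($n+1$ versus $n$), and separates the members of 2(a) from one another by the rational canonical form of the induced operator $b^{-1}a$ on $M/UM$. Your invariants --- $\dim(M/\mathrm{rad}\,M)-\dim\,\mathrm{soc}\,M$, injectivity of $\bar b$, and the characteristic polynomial of $\bar b^{-1}\bar a$ --- are the same data in slightly different packaging. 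Where you genuinely diverge is the indecomposability half: the paper's proof of this lemma only argues pairwise non-isomorphism and says nothing about indecomposability, whereas you supply the standard argument (the kernel of $\mathrm{End}_{kV_4}(M)\to\mathrm{End}_k(M/\mathrm{rad}\,M)$ is nil because $U^2M=0$, so it suffices that the image contain no nontrivial idempotents). That is a real addition, and the method is sound; the only cost is the arrow-chase you leave as a sketch, which is routine for these modules but is where all the actual work sits.

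One misstatement to repair: for the even family 2(b) the chase does \emph{not} show that $\bar\varphi$ is a scalar. There one of the induced maps on the top is bijective and the other is a single nilpotent Jordan block $J_0$, and the compatibility relations only force $\bar\varphi$ to commute with $J_0$; indeed $\mathrm{End}_{kV_4}(M)\cong k[J_0]\cong k[x]/(x^n)$, which is scalar plus nilpotent, not scalar. This does not hurt the conclusion: $k[x]/(x^n)$ is local, so 2(b) should simply be handled by the same commutant argument you already use for 2(a) (the centralizer of a non-derogatory matrix $S$ is $k[S]$), reserving the ``$\bar\varphi$ is a scalar'' conclusion for the trivial module and the odd zig-zags, where it is correct. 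Relatedly, ``a subring of a commutative Artinian local ring is local'' is sloppy as stated; what you need, and what is true here, is that a finite-dimensional unital subalgebra of an algebra with no nontrivial idempotents has no nontrivial idempotents. With these adjustments, and the zig-zag chase written out, your proof is complete and in fact proves more than the paper's own argument does.
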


\begin{proof} Item (1) is not isomorphic to the rest because it is
the only module that contains a non-zero element $x$ such that
$(ab)x \ne 0$. Modules in item (2) are even dimensional and those in
(3) are odd dimensional and hence there is no overlap between the
two. To see that all the $2n$ dimensional representations of item
$2(a)$ are distinct, it is enough to observe that the rational
canonical forms of the linear transformations on the co-invariant
submodules,
 \[ b^{-1}a : M/UM \rar M/UM \]
are distinct, where $U$ is the ideal generated by $a$ and $b$. To see that the $2n$ dimensional representation of
item $2(b)$ does not occur in item $2(a)$ observe that kernel of the
$b$-action in both cases have different dimensions: $n$ for the
module in $2(a)$ and $n+1$ for that in 2$(b)$. The two $2n+1$
dimensional modules in items $3(b)$ and $3(c)$ are non-isomorphic
because it is clear from the diagrams that the dimensions of the
invariant submodules in both cases are different: $n$ for those in
item $3(b)$, and  $n+1$ for those in item $3(c)$.
\end{proof}

Of course the hard thing is to show that every indecomposable
representation of $V_4$ is isomorphic to one  in the above list.
Since projective modules over $p$-groups are free,
 there is only one indecomposable projective $V_4$-module,
namely $kV_4$ which occurs as item (1) in the list. Therefore we
only consider indecomposable projective-free (modules which do not
have projective summands) $V_4$-modules.

One can get a better handle on the projective-free representations
of $V_4$ by studying the representations of the  so called Kronecker
Quiver, which is a directed graph $Q$ on two vertices as shown
below.
\[
\xymatrix{
 u_1 \bullet \ar@/^10pt/[rr]^f   \ar@/_10pt/[rr]_g &  & \bullet u_2
}
\]
A representations of the above quiver is just a pair of finite
dimensional $k$-vector spaces $V$ and $W$ and a pair of $k$-linear
maps $\psi_1$ and $\psi_2$ from $V$ to $W$. Such a representation
will be denoted by the  four tuple $[V, W ; \psi_1, \psi_2]$, and
given two such representations, the notion of  direct sum, and
morphisms between them are  defined in the obvious way.  Thus it
makes sense to talk about the isomorphism class of an indecomposable
representation of $Q$. Let us call a representation of $Q$ special
if the following conditions hold:
\begin{eqnarray*}
\Ker(\psi_1) \;  \bigcap \; \Ker(\psi_2)  & = & 0 \\
\Image(\psi_1) + \Image(\psi_2) & = & V_2.
\end{eqnarray*}

\begin{prop} \label{prop1} \cite{hell-rei} There is a one-one correspondence
between  the isomorphism classes of (indecomposable) projective-free
representations of $V_4$ and those of the special  (indecomposable)
representations of the Kronecker quiver. Under this correspondence,
an (indecomposable) projective-free representation $M$ of $G$
corresponds to the (indecomposable) representation  of $Q$ that is
given by $[M/UM, UM; a, b]$. Conversely, given an (indecomposable)
special representation $[V, W ; \psi_1, \psi_2]$ of $Q$, the
corresponding (indecomposable) $G$-module $M$ is given by $M = V
\oplus W$ where $a(\alpha, \beta) := (0, \psi_1(\alpha))$ and
$b(\alpha, \beta) := (0, \psi_2(\alpha)).$
\end{prop}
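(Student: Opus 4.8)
The plan is to construct the two assignments $M \mapsto [M/UM, UM; a, b]$ and $[V,W;\psi_1,\psi_2] \mapsto V \oplus W$ explicitly, check that each is well-defined on objects (sending special representations to projective-free modules and vice versa), extend both to functors on the relevant categories, and then verify they are mutually inverse equivalences; the statement about isomorphism classes and indecomposability then follows formally. First I would set up the module side: since $U^2$ is spanned by $ab$ and we work in $kV_4/(ab)$, for a projective-free module $M$ one has $U^2 M = 0$, so $a,b$ both factor through $M/UM$ and land in $UM$, giving a well-defined representation $[M/UM, UM; a, b]$ of $Q$. I would check the two \emph{special} conditions: $\operatorname{Image}(a) + \operatorname{Image}(b) = UM$ holds by definition of $UM$, and $\operatorname{Ker}(a) \cap \operatorname{Ker}(b)$ on $M/UM$ being zero is equivalent to saying no quotient of $M$ is trivial-isotypic beyond $UM$, i.e. $M$ has no free summand split off incorrectly — more precisely I would show that a nonzero element of $M/UM$ killed by both $a$ and $b$ lifts to a summand, and by projective-freeness (and a minimality/Fitting-type argument) this forces a contradiction unless it is zero. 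Conversely, given a special $[V,W;\psi_1,\psi_2]$, set $M = V \oplus W$ with $a(\alpha,\beta) = (0,\psi_1\alpha)$, $b(\alpha,\beta) = (0,\psi_2\alpha)$; then $a^2 = b^2 = ab = 0$ is immediate since the image lies in $0 \oplus W$ which is annihilated, so $M$ is a genuine $kV_4/(ab)$-module, and one checks $UM = 0 \oplus (\operatorname{Image}\psi_1 + \operatorname{Image}\psi_2) = 0 \oplus W$ by the second special condition, hence $M/UM \cong V$ and the round trip recovers $[V,W;\psi_1,\psi_2]$ up to canonical isomorphism.

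Next I would verify $M$ is projective-free: a free summand of $M$ is a copy of $kV_4/(ab)$, which contains an element $x$ with $ax, bx$ linearly independent and $ax, bx \in UM$ while $x \notin UM$; tracing through $M = V \oplus W$, $x = (\alpha,\beta)$ with $\alpha \neq 0$ and $\psi_1\alpha, \psi_2\alpha$ linearly independent — but then I would show that splitting off this summand would leave $\alpha$ outside the "support" needed, and the first special condition $\operatorname{Ker}\psi_1 \cap \operatorname{Ker}\psi_2 = 0$ is exactly what rules out the complementary piece being a module, so no free summand exists. The other direction of the object correspondence — that $[M/UM, UM;a,b]$ is special when $M$ is projective-free — I addressed above; the two special conditions are precisely the translations of "$UM$ is generated by the images" and "$M$ is projective-free."

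Finally, to upgrade the bijection-on-objects to a correspondence of isomorphism classes respecting indecomposability, I would promote both constructions to functors: a $V_4$-map $M \to M'$ induces $M/UM \to M'/UM'$ and $UM \to UM'$ compatibly with $a,b$, and a quiver morphism $(V \to V', W \to W')$ induces $V \oplus W \to V' \oplus W'$ that is $V_4$-linear. Checking these are quasi-inverse is then routine naturality-chasing using the canonical isomorphisms from the object-level round trips. Since an equivalence of categories preserves and reflects both isomorphism and indecomposability (the latter because it preserves direct sum decompositions and the zero object), the parenthetical claims follow. \textbf{The main obstacle} I anticipate is the projective-free direction: showing cleanly that $M = V\oplus W$ has no free summand exactly when $\operatorname{Ker}\psi_1 \cap \operatorname{Ker}\psi_2 = 0$, and dually that a projective-free $M$ yields the first special condition — this requires a careful argument that a would-be free summand of $M$ produces a nonzero vector in $\operatorname{Ker}\psi_1 \cap \operatorname{Ker}\psi_2$ (or, working with $M/UM$, that the intersection of kernels of $a$ and $b$ splits off as a trivial summand which, being non-projective but a summand of a module one wants projective-free, must vanish). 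Everything else is bookkeeping with the relation $ab = 0$ and the elementary structure of $kV_4/(ab)$.
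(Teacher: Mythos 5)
Your overall skeleton (two explicit constructions, checked to be mutually inverse and compatible with direct sums) is the same as the paper's, but the two places where you would need actual content are wrong or missing. First, the step you yourself flag as the main obstacle rests on a false equivalence: the first special condition $\Ker\psi_1\cap\Ker\psi_2=0$ is \emph{not} the translation of projective-freeness. Projective-freeness of $M=V\oplus W$ is automatic and independent of that condition: $ab$ acts as zero on $V\oplus W$ by construction, while $ab\neq 0$ on any copy of $kV_4$, so no free summand can occur (note also that a free summand is a copy of the four-dimensional module $kV_4$, not of $kV_4/(ab)$ as you write). In the other direction, a nonzero class in $\Ker(a)\cap\Ker(b)\subseteq M/UM$ splits off a \emph{trivial} summand $k$, not a free one, so your planned contradiction with projective-freeness never materializes: $M=k$ and $M=k\oplus\Omega^{1}(k)$ are projective-free yet their associated quiver data violate the first condition. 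What that condition actually encodes is $\operatorname{Soc}(M)=UM$, i.e.\ $UM=M^{V_4}$, and this is where indecomposability and nontriviality of $M$ must enter --- exactly the point the paper makes when it restricts to indecomposable nontrivial $M$ before asserting $UM=M^{V_4}$.

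Second, you only verify one of the two round trips (special quiver representation $\to$ module $\to$ quiver representation). The heart of the paper's proof is the other one: for a projective-free $M$ one must show that $M$ is isomorphic \emph{as a $V_4$-module} to $M/UM\oplus UM$ with the quiver-type action $a(x,y)=(0,ax)$, $b(x,y)=(0,bx)$. This is not formal bookkeeping: the paper chooses a $k$-linear retraction $\pi\colon M\to UM$ of the inclusion and checks that $m\mapsto(\bar m,\pi(m))$ is $V_4$-equivariant, using $U\cdot UM=0$. Your ``routine naturality-chasing using the canonical isomorphisms from the object-level round trips'' presupposes exactly this isomorphism, which you never construct. (Both you and the paper take $abM=0$ for projective-free $M$ largely on faith; the quick justification is that $abm\neq 0$ would make $kV_4m$ a free, hence injective, hence split submodule.)
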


We will use this translation between the representations of the
Klein group and the Kronecker Quiver freely through out the paper.

If $M = [V_1, V_2; a, b]$ is an indecomposable projective-free
representation, then we have
\begin{align*}
V_1 &= 0 \; \Leftrightarrow \; M = 0 \\
   V_2 &= 0\; \Leftrightarrow \; M = k.
\end{align*}
So henceforth it will be assumed that the spaces $V_1$ and $V_2$ are
non-zero, i.e., $M$ is an indecomposable projective-free and a
non-trivial representation of $V_4$.

We begin with some lemmas  that will help streamline the proof of
the classification theorem.  The proofs of these lemmas will be deferred to the last section.
It should be noted that these lemmas are also of independent interest.

\begin{lemma} \cite{ben-1}\label{lemma1} Let $M$ be a projective-free $V_4$-module given by $[V_1, V_2; a, b]$.
Then we have the following.
\begin{enumerate}
\item $M$ contains a copy of $\Omega^l(k)$ for some positive integer $l$ if and
only if the transformation
\[a + \lambda b : V_1 \otimes_k k[\lambda] \rar V_2 \otimes_k k[\lambda]\]
is singular, i.e, $det( a + \lambda b) = 0$.
\item Dually, $\Omega^{-l} (k)$ is a quotient of $M$ for some
positive integer $l$ if and only if the transformation
\[ a^* + \lambda b^* : V_2^* \otimes_k k[\lambda] \rar V_1^* \otimes_k k[\lambda]  \]
is singular.
\end{enumerate}
\end{lemma}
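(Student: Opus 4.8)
The plan is to prove part (1) and then obtain part (2) by dualizing, using the fact that $\Omega^{-1}$ corresponds to $\Omega$ under the duality functor $M \mapsto M^*$ (together with the observation, recorded already via the Kronecker-quiver dictionary, that $M^* = [V_2^*, V_1^*; a^*, b^*]$ up to the special-representation normalization). So the real content is the statement about $\Omega^l(k)$ and the singularity of $a + \lambda b$ over $k[\lambda]$.

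First I would pin down explicitly what $\Omega^l(k)$ looks like as a Kronecker-quiver representation: it is the "zig-zag" string module appearing as item $3(b)$ in the classification theorem (with $l = n$), whose associated pair of maps $[V_1, V_2; a, b]$ has $\dim V_1 = l$, $\dim V_2 = l+1$, and in suitable bases $a$ sends the $i$-th basis vector of $V_1$ to the $i$-th basis vector of $V_2$ and $b$ sends it to the $(i+1)$-st; equivalently $a + \lambda b$ is represented by the $(l+1)\times l$ matrix with $1$'s on the main diagonal and $\lambda$'s on the subdiagonal. The key elementary fact is that this matrix has rank $l$ for every value of $\lambda$ (it visibly has an $l \times l$ identity minor), so its maximal minors generate the unit ideal in $k[\lambda]$ — that is, $a + \lambda b$ is \emph{injective} after base change to $k[\lambda]$, and more importantly its cokernel is the "generic $1$-dimensional" representation. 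Conversely, a submodule isomorphic to $\Omega^l(k)$ inside $M = [V_1, V_2; a, b]$ is exactly a choice of subspaces $V_1' \subseteq V_1$, $V_2' \subseteq V_2$ with $a(V_1'), b(V_1') \subseteq V_2'$ on which the restricted pair is isomorphic to the above string.

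For the forward direction ($M$ contains $\Omega^l(k)$ $\Rightarrow$ $\det(a+\lambda b)=0$), I would argue as follows. If $\dim V_1 \ne \dim V_2$ the determinant is not even defined (the matrix is not square), so we may assume $\dim V_1 = \dim V_2 = m$ and must show the determinant of the $m\times m$ matrix $a + \lambda b \in M_m(k[\lambda])$ vanishes. A copy of $\Omega^l(k)$ inside $M$ produces, after extending scalars to the field $k(\lambda)$, a nonzero vector in the kernel of $a + \lambda b$ acting on $V_1 \otimes k(\lambda)$: concretely, on the embedded string module $a + \lambda b$ has a nontrivial kernel over $k(\lambda)$ because the string has one more vertex on the bottom row than on the top, so the transpose (the $l \times (l+1)$ matrix) has a kernel; passing to $M$ and then to the square matrix, a dimension count forces $\det(a + \lambda b) = 0$ in $k(\lambda)$, hence in $k[\lambda]$. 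For the converse ($\det(a+\lambda b) = 0$ $\Rightarrow$ $M$ contains some $\Omega^l(k)$), the plan is to use the classification theorem itself: decompose $M$ into indecomposables, compute $\det(a+\lambda b)$ on each summand from its diagram, and check that the determinant is a nonzero polynomial (possibly a nonzero scalar) on every indecomposable \emph{except} the syzygies $\Omega^l(k)$ and the modules of type $2(a)$ attached to $f(x)^l$ — on the latter the determinant is, up to a unit, the polynomial $f(\lambda)^l$ evaluated appropriately, which is \emph{nonzero}. Wait — so among the indecomposables the only ones on which $a + \lambda b$ is genuinely singular over $k[\lambda]$ are the $\Omega^{\pm}$-strings; since $\det(a+\lambda b)$ of a direct sum is the product of the determinants of the summands, $\det(a+\lambda b)=0$ forces at least one summand to be some $\Omega^l(k)$, which is then the required submodule.

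The main obstacle I anticipate is the converse direction, and specifically the bookkeeping in computing $\det(a+\lambda b)$ for the family $2(a)$: one must see that the "vertical dotted arrow" encoding $a(g_{n-1}) = \sum_i \theta_i f_i$ contributes exactly the companion-matrix structure of $f(x)^l$, so that the determinant comes out to $\pm f(\lambda)^l$ (a unit times a power of the irreducible polynomial) rather than something that could accidentally vanish. A cleaner route that sidesteps invoking the full classification would be: reduce $a+\lambda b$ over the PID $k[\lambda]$ to Smith normal form, interpret the invariant factors representation-theoretically (the nonconstant invariant factors are precisely the elementary-divisor data of the $2(a)$-summands, and a zero invariant factor signals an $\Omega$-string summand), and read off the equivalence directly; but this essentially re-proves a chunk of the classification, so for a "minimal background" exposition the decompose-and-check argument is probably what I would actually write, accepting the computation in case $2(a)$ as the one genuinely technical point. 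Part (2) then follows formally by applying part (1) to $M^*$ and using $\Omega^{-l}(k)^* \cong \Omega^{l}(k)$ together with $(a + \lambda b)^* = a^* + \lambda b^*$.
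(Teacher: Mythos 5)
Your proposal has two genuine problems, one logical and one computational. The logical one is circularity: your converse direction ("decompose $M$ into indecomposables, compute $\det(a+\lambda b)$ on each summand, observe only the $\Omega$-strings are singular") invokes the classification theorem, but Lemma~\ref{lemma1} is precisely one of the tools used to prove that theorem (it is applied in both the even- and odd-dimensional cases), so within this paper the argument assumes what it is ultimately meant to establish; the Smith-normal-form variant you sketch has the same defect, as you yourself note. The paper's converse is instead self-contained: pick a nonzero $V(\lambda)=g_0+g_1\lambda+\cdots+g_l\lambda^l$ of \emph{minimal} degree in $\Ker(a+\lambda b)$; the resulting relations $a(g_0)=0$, $a(g_{i+1})=b(g_i)$, $b(g_l)=0$ already have the shape of $\Omega^l(k)$, and the only work is to show $g_0,\dots,g_l$ are linearly independent, which is done by showing that a nontrivial dependence among $a(g_1),\dots,a(g_l)$ would let one manufacture a kernel vector of degree strictly less than $l$, contradicting minimality. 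Note also that even granting the classification, your summand analysis conflates $\Omega^{l}(k)$ and $\Omega^{-l}(k)$: on $\Omega^{-l}(k)$ the map $a+\lambda b$ is injective over $k[\lambda]$, and if the conclusion were only "some $\Omega^{\pm}$-string occurs as a summand," you could not deduce a submodule isomorphic to $\Omega^{l}(k)$ with $l>0$.

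The computational problem is that you have the Kronecker shape of $\Omega^l(k)$ backwards. With $V_1=M/UM$ on top and $V_2=UM$ below, $\Omega^l(k)$ for $l>0$ has $\dim V_1=l+1$ and $\dim V_2=l$ (already for $l=1$: the augmentation ideal has two generators over a one-dimensional radical); the representation you describe, with an $(l+1)\times l$ matrix carrying an identity minor and hence injective over $k[\lambda]$, is $\Omega^{-l}(k)$. As written, your forward direction first asserts injectivity of $a+\lambda b$ on the embedded string and then tries to extract a kernel vector via the transpose and a "dimension count"; these statements do not cohere, and injectivity of the restriction would in any case say nothing about the vanishing of $\det(a+\lambda b)$ on $M$. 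The correct and very short argument is the paper's: if $g_0,\dots,g_l$ are the top basis vectors of the embedded copy, then $V(\lambda)=g_0+g_1\lambda+\cdots+g_l\lambda^l$ satisfies $(a+\lambda b)V(\lambda)=0$, which exhibits the singularity directly. Your reduction of part (2) to part (1) by duality, using $(\Omega^{l}(k))^*\cong\Omega^{-l}(k)$, is fine and agrees with the paper.
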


The next lemma is very crucial to the classification. To the best of our knowledge, nowhere in the literature is this
lemma stated explicitly, although it is secretly hidden in Benson's proof of the classification \cite{ben-1}.
We use Auslander-Reiten sequences to give a transparent proof of this lemma in the last section.

\begin{lemma} \label{lemma2} Let $M$ be a projective-free $V_4$-module. Then we have
the following.
\begin{enumerate}
  \item If $l$ is the smallest positive integer such that
  $\Omega^l(k)$ is isomorphic to a submodule of
  $M$, then $\Omega^l(k)$ is a summand of $M$.
  \item Dually, if $l$ is the smallest positive integer such that
  $\Omega^{-l}(k)$ is isomorphic to a quotient module of
  $M$, then $\Omega^{-l}(k)$ is a $G$-summand of $M$.
\end{enumerate}
\end{lemma}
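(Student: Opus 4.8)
The plan is to prove part (1) using the theory of Auslander--Reiten sequences; part (2) then follows by dualizing (apply part (1) to $M^*$ and use that the dual of a submodule is a quotient, together with $(\Omega^l k)^* \cong \Omega^{-l}(k)$). So I focus on (1). Suppose $l$ is minimal with a submodule inclusion $\iota\colon \Omega^l(k) \hookrightarrow M$. The key structural fact I want to invoke is that for the Klein group, each non-projective indecomposable $\Omega^l(k)$ sits in an Auslander--Reiten sequence
\[
0 \lrar \Omega^l(k) \stk{\varphi} E \lrar \Omega^{l-?}(k) \text{ (or similar)} \lrar 0,
\]
and more to the point, $\Omega^l(k)$ has the property that \emph{any} map from it to a module which is not a split injection factors through the middle term of the AR sequence ending at $\Omega^l(k)$ — equivalently, by the defining ``almost split'' property, any map \emph{into} $\Omega^l(k)$ which is not a split surjection factors through $E \to \Omega^l(k)$. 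I would set this up in the form most convenient for the argument: since $M$ is projective-free, it is built from the $\Omega$-periodic pieces, and the obstruction to $\iota$ being a split injection is exactly the existence of a retraction $M \to \Omega^l(k)$.

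The main step is the following dichotomy. If the inclusion $\iota\colon \Omega^l(k) \hookrightarrow M$ is split, we are done, since then $\Omega^l(k)$ is a summand of $M$. If it is not split, then by the almost-split property of the AR sequence $0 \to A \to E \to \Omega^l(k) \to 0$ terminating at $\Omega^l(k)$ (here $A = \Omega^2$ of $\Omega^l(k)$, up to the usual shift, since for $V_4$ the AR translate is $\Omega^2$), the map $\iota$ is \emph{not} relevant in that direction; instead I use the version for maps out of $\Omega^l(k)$: consider the composite and observe that a non-split $\iota$ forces the existence of a certain map $M \to \Omega^{l'}(k)$ with $l' < l$ arising from the AR structure, or dually a submodule $\Omega^{l'}(k) \hookrightarrow M$ with $l' < l$, contradicting minimality of $l$. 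The cleanest route: the AR sequence ending in $\Omega^l(k)$ has middle term a direct sum of (smaller, i.e. lower Heller-index) pieces together with projectives; if $\iota$ is not split it lifts along $E \to \Omega^l(k)$ to a map $\Omega^l(k) \to E$ whose components land in those smaller syzygies, and chasing this through Lemma~\ref{lemma1} (which detects exactly when $\Omega^j(k)$ embeds in $M$ via singularity of $a+\lambda b$) produces a copy of some $\Omega^{l-1}(k)$ or lower inside $M$ — again contradicting minimality.

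The hard part will be pinning down precisely which AR sequence to use and verifying that its middle term really does consist of syzygies of strictly smaller Heller index (plus projectives), so that the minimality of $l$ can be brought to bear; this requires knowing the AR sequences for $kV_4$ explicitly, namely that the AR translate is $\Omega^2$ and that the sequence ending at $\Omega^l(k)$ has the shape $0 \to \Omega^{l+2}(k) \to \Omega^{l+1}(k) \oplus \Omega^{l+1}(k) \to \Omega^l(k) \to 0$ (for $l$ in the appropriate range), possibly modified by a projective summand at the ends of the ``staircase.'' Once that is in hand, the factorization argument is formal: a non-split inclusion $\Omega^l(k)\hookrightarrow M$ factors through the middle term, hence $M$ receives a map from $\Omega^{l+2}(k)$ and contains $\Omega^{l\pm 1}(k)$ — and I must check carefully that it is the \emph{smaller}-index syzygy that embeds, contradicting minimality of $l$. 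I would also need the easy observation that the zero map is split only when $\Omega^l(k) = 0$, which is excluded, so the dichotomy is genuine. Part (2) is then obtained by applying part (1) to $M^*$, using $M$ projective-free $\Leftrightarrow$ $M^*$ projective-free and the duality $(\Omega^l k)^* \cong \Omega^{-l} k$, together with the contravariant exactness of $(-)^*$ turning the summand inclusion into a summand projection.
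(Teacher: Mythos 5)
Your dualization of part (2) and the general idea of using Auslander--Reiten sequences match the paper, but the core of your argument for part (1) has genuine gaps. First, you are using the wrong sequence and the wrong half of the almost-split property. The inclusion $\iota\colon \Omega^l(k)\hookrightarrow M$ is a map \emph{out of} $\Omega^l(k)$, so the relevant fact is that a non--split-injective map out of the left-hand term of an AR sequence factors through its middle term; hence one must use the AR sequence \emph{beginning} at $\Omega^l(k)$, namely $0 \to \Omega^{l}(k) \to \Omega^{l-1}(k)\oplus\Omega^{l-1}(k) \to \Omega^{l-2}(k)\to 0$ (and $0 \to \Omega^1(k) \to kV_4\oplus k\oplus k \to \Omega^{-1}(k)\to 0$ when $l=1$). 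The sequence you propose, the one \emph{ending} at $\Omega^l(k)$, has middle term $\Omega^{l+1}(k)\oplus\Omega^{l+1}(k)$ --- larger, not smaller, syzygies --- and there is no ``lifting along $E\to\Omega^l(k)$'' for a map out of $\Omega^l(k)$; that property only applies to maps into $\Omega^l(k)$ that are not split epimorphisms.

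Second, even with the correct sequence, a single application does not yield the contradiction you claim: the components $f,g\colon \Omega^{l-1}(k)\to M$ of the factorization need not be injective, so you do not immediately get a copy of a smaller syzygy inside $M$ (and your appeal to Lemma~\ref{lemma1} only detects that \emph{some} $\Omega^j(k)$ embeds, with no control on $j$). The paper's proof iterates: whenever a component is injective, minimality of $l$ is contradicted; otherwise it is not split injective and factors through the next middle term, descending until the original embedding factors as $\Omega^l(k)\to (kV_4)^s\oplus k^t \to M$. The decisive step, entirely absent from your proposal, is showing this terminal factorization is impossible for projective-free $M$: the invariants $(\Omega^l(k))^{V_4}$ land in $((kV_4)^s)^{V_4}$, and the map $(kV_4)^s\oplus k^t\to M$ must annihilate each one-dimensional $(kV_4)^{V_4}$, since otherwise a copy of the injective module $kV_4$ would embed in, and hence split off, $M$. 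This kills the socle of $\Omega^l(k)$ and contradicts the injectivity of $\iota$; it is precisely here that projective-freeness of $M$ enters, and without this step (or a substitute) the dichotomy you set up does not close.
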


\begin{lemma} \cite{johnson-1}\label{lemma3} For all integers $n$, $\Omega^n (k)$ is isomorphic to the dual
representation  $\Omega^{-n} (k) ^*$. Furthermore,
\begin{enumerate}
  \item If $n$ is positive, then $\Omega^n (k)$ is a $2n+1$ dimensional indecomposable representation
  given by
  \[
\xymatrix@=2em{
    &  \bullet \ar[dr]_b & & \bullet\ar[dl]^a \ar[dr] &  & \bullet \ar[dl]  & \bullet  \hspace{4 mm}
      \bullet \hspace{4 mm} \bullet & \bullet \ar[dr]
      & & \bullet \ar[dl] \\
    &   & \bullet & & \bullet &  & &  & \bullet &
    }
\]
\item If $n$ is a negative integer, then $\Omega^n (k)$ is a $2n+1$
dimensional indecomposable representation given by
     \[
     \xymatrix@=2em{
& & \bullet \ar[dr]^b \ar[dl]_a & & \bullet \ar[dl] \ar[dr] & &
      & & \bullet \ar[dl] \ar[dr] &  \\
 &    \bullet &  & \bullet & & \bullet & \bullet  \hspace{4 mm}
      \bullet \hspace{4 mm} \bullet  &  \bullet & & \bullet
}
\]
\end{enumerate}
\end{lemma}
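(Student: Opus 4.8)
The plan is to identify the Heller shifts $\Omega^n(k)$ explicitly by computing minimal projective resolutions of the trivial module, and then to read off the diagrams and the duality statement. I would work throughout with the Kronecker-quiver dictionary of Proposition~\ref{prop1}, since $\Omega$ on projective-free modules translates into a simple operation on special representations of $Q$. First I would recall that $\Omega(k)$ is the kernel of the minimal projective cover $kV_4 \twoheadrightarrow k$; since $kV_4 = k[a,b]/(a^2,b^2)$ has a unique maximal submodule (the augmentation ideal $U$), this cover is $U \hookrightarrow kV_4 \to k$, so $\Omega(k) = U$, which is three-dimensional with basis $\{a, b, ab\}$ — and one checks directly this is the diagram in 3(b) (equivalently $\Omega^1(k)$, the $n=1$ case). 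Dually, $\Omega^{-1}(k)$ is the cokernel of $k \hookrightarrow kV_4$ (the socle inclusion $ab \mapsto$ the bottom), which is again three-dimensional and matches the diagram in 3(c).

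Next I would set up the induction. The key structural fact is that for any projective-free $M = [V_1, V_2; a, b]$, a minimal projective cover can be built as follows: $M$ is generated by $V_1 = M/UM$, so the cover is $(kV_4)^{\dim V_1} \twoheadrightarrow M$, and $\Omega(M)$ is its kernel. For $M = \Omega^n(k)$ with the "zigzag going down then up" shape of 3(a)(n) (top row $n$ bullets, bottom row $n+1$ bullets, the two arrows at each end pointing the same way), one computes $\dim M/UM = n$, so the cover has rank $n$, total dimension $4n$, and the kernel has dimension $4n - (2n+1) = 2n+1$; a direct bookkeeping of how the $n$ copies of $kV_4$ map onto the zigzag shows the kernel is again of the same shape with $n$ replaced by $n+1$. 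This gives $\Omega(\Omega^n k) = \Omega^{n+1}k$ with the claimed picture, and the negative case follows by the dual computation using injective envelopes (equivalently, apply $(-)^*$, noting that $(-)^*$ sends the diagram of shape 3(b)-type to shape 3(c)-type and exchanges projective covers with injective envelopes, which is exactly the content of the identity $\Omega^n(k) \cong (\Omega^{-n}(k))^*$ that the lemma also asserts).

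For the duality statement $\Omega^n(k) \cong \Omega^{-n}(k)^*$ I would argue cleanly rather than by comparing pictures: the functor $M \mapsto M^*$ is exact and sends projectives to projectives (over a group algebra, $kV_4$ is self-dual), hence it sends a minimal projective resolution of $k$ to a minimal injective resolution of $k^* = k$, and therefore intertwines $\Omega$ with $\Omega^{-1}$; iterating gives $(\Omega^n k)^* \cong \Omega^{-n}(k^*) = \Omega^{-n}(k)$, which is the claim. Finally, indecomposability of $\Omega^n(k)$ for all $n$ follows because $k$ is indecomposable and $\Omega^{\pm 1}$ preserves indecomposability of projective-free modules (a standard consequence of Krull--Schmidt together with the fact that $\Omega$ and $\Omega^{-1}$ are mutually inverse on the stable category), or alternatively one can read it off the diagram directly as in Lemma~\ref{lemma0}. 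The main obstacle I anticipate is purely organizational: carrying out the kernel-of-the-cover computation in enough generality to see that the zigzag shape is genuinely reproduced with its length incremented — this is a finite but slightly fiddly diagram chase, and it is cleanest to phrase it in the $[V_1,V_2;a,b]$ language, where $\Omega$ becomes the operation $[V_1,V_2;a,b] \mapsto [\text{(rel.\ kernel)}, V_1; \bar a, \bar b]$ on special representations of the Kronecker quiver and the combinatorics of the zigzag becomes transparent.
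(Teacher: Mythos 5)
Your overall route is the same as the paper's: identify $\Omega^1(k)$ as the augmentation ideal, compute minimal projective covers of the zigzag modules and watch the zigzag reproduce itself with its length incremented, and obtain the negative case together with $\Omega^{-n}(k)\cong \Omega^n(k)^*$ by dualising (the paper dualises $0\to\Omega^1(k)\to kV_4\to k\to 0$ and inducts; your argument that $(-)^*$ is exact, preserves projectives, and hence turns a minimal projective resolution of $k$ into a minimal injective one is the same point made globally). The paper itself only carries out the cases $n=1,2$ and declares the general pattern clear, so deferring the general bookkeeping is not by itself a defect, and your remarks on indecomposability are fine.

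However, the one step of that bookkeeping you do spell out -- the inductive dimension count -- is wrong as stated, in two compounding ways. For $n>0$ the module $\Omega^n(k)$ has $n+1$ bullets in the top row and $n$ in the bottom row (your own base case $\Omega^1(k)=U$ has the two generators $a,b$ on top and $ab$ below; also note this is item 3(b) of the theorem, not 3(a)), so $\dim \Omega^n(k)/U\Omega^n(k)=n+1$, the minimal cover is $(kV_4)^{n+1}$ of dimension $4(n+1)$, and its kernel has dimension $4(n+1)-(2n+1)=2n+3=2(n+1)+1$, as it must. Your version -- top row $n$, bottom row $n+1$, cover of rank $n$, kernel of dimension $4n-(2n+1)=2n+1$ -- both flips the zigzag (top $n$, bottom $n+1$ is the shape of $\Omega^{-n}(k)$, not $\Omega^{n}(k)$) and miscomputes the difference, since $4n-(2n+1)=2n-1$; with those numbers the kernel cannot be ``the same shape with $n$ replaced by $n+1$,'' so the induction as written does not close. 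This is a repairable slip rather than a wrong idea, but it sits exactly at the step you yourself flagged as the crux, so it must be corrected and the diagram chase actually carried out: with generators $g_0,\dots,g_n$ of $\Omega^n(k)$ satisfying $a(g_0)=0$, $b(g_i)=a(g_{i+1})$, $b(g_n)=0$, the kernel of $(kV_4)^{n+1}\to \Omega^n(k)$, $e_i\mapsto g_i$, has top row $a e_0,\; b e_0+a e_1,\;\dots,\; b e_{n-1}+a e_n,\; b e_n$ and bottom row $ab\,e_0,\dots,ab\,e_n$, which is precisely the zigzag of length $2(n+1)+1$, exactly as in the paper's $n=1,2$ pictures.
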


We now give the proof of the classification theorem assuming these
lemmas. The lemmas will be proved in the last section.  Let $M = (V_1,
V_2; a, b)$ be an indecomposable projective-free  representation of
$V_4$. We will show that $M$ is isomorphic to one of the
representation that appear in items (2) it is even dimensional, and
to those in item (3) if it odd dimensional.

\subsection{Even dimensional representations} Let $M$ be an even
dimensional ($2n$ say) indecomposable representation. We break the
argument into cases for clarity. \\

\n \emph{Case 1:} $\det (a + \lambda b)$ is non-zero. We have two
subcases. First assume that $\det b \ne 0$. Then consider the map
\[ b^{-1}a : V_1 \longrightarrow V_1.\]
We claim that this map is indecomposable. Suppose we have a
decomposition $f \oplus g$ of $b^{-1}a$ as follows
\[
\xymatrix{ V_1 \ar[r]^{b^{-1}a} \ar[d]_{\cong} & V_1 \ar[d]^{\cong}\\
A \oplus B \ar[r]_{f \oplus g} & A \oplus B
 }
\]
Set $C:= b(A)$ and $D:= b(B)$. Then it is straightforward  to verify
that $M = (V_1, V_2; a, b)$ decomposes as
\[ (A, C; a|_{A}, b|_{A}) \; \bigoplus \; (B, D; a|_{B}, b|_{B}).\]
Since $M$ is indecomposable it follows that the map  $b^{-1}a$ is
indecomposable. Now since $b^{-1}a$ is indecomposable we can choose
a basis $\{g_0, g_1, \cdots, g_{n-1}\} $ of $V_1$ such that the
rational canonical form of $b^{-1}a$ has only one block which
corresponds to some power of an irreducible polynomial $f(x)^r =
\sum_{i = 0}^{n-1} \theta_i x^i$. This means we have
\begin{eqnarray*}
b^{-1}a \; (g_i) &= & g_{i+1}  \hspace{3 mm} \text{for} \;\; 0 \le i \le n-2, \\
b^{-1}a \; (g_{n-1}) &=& \sum_{i=0}^{n-1} \theta_i g_i. \hspace{3
mm} (*)
\end{eqnarray*}

Now the vectors $f_i := b(g_i)$ for $0 \le i \le n-1$ define a basis
for $V_2$ because $b$ is non-singular. With respect to the bases
$(g_i)$ of $V_1$ and $(f_i)$ of $V_2$, it  is now clear that $M$ has
the description
\[
\xymatrix@=1.9em{
       \overset{g_{n-1}}{\bullet} \ar[dr]_b & & \overset{g_{n-2}}{\bullet}\ar[dl]^a \ar[dr] &  &
      \overset{g_{n-2}}{\bullet} \ar[dl]  & \bullet  \hspace{4 mm}
      \bullet \hspace{4 mm} \bullet
      & & \overset{g_0}{\bullet} \ar[dl] \ar[dr] \\
       & \underset{f_{n-1}}{\bullet} & & \underset{f_{n-2}}{\bullet} &  & &  \underset{f_1}{\bullet} & &
      \underset{f_0}{\bullet}
    }
\]
The action of $a$ on $g_{n-1}$ can be seen by applying $b$ on both
sides of the equation (*) above: $a(g_{n-1}) = \sum_{i=0}^{n-2}
b(g_i) = \sum_{i=0}^{n-1} f_i$. These representations are the
exactly ones in item 3(a).

Now if $\det(b)  = 0$, we do a change of coordinate trick.
We assume that $k$ is an infinite field. If $k$ is finite, we can pass to an extension field and do a descent argument; see \cite{ben-1} for details.   Then there exists some
$\lambda_0$ in $k$ such that $\det(a + \lambda_0 b) \ne 0$.  Now
consider the tuple $(V_1, V_2; b, a+\lambda_0 b)$. By case (i), we
know that there exist bases for $V_1$ and $V_2$ such that $a +
\lambda_0 b = I$ and $b = J_0$ (the rational canonical form \cite{LinearAlgebra} \footnote{Most readers are familiar with the Jordan canonical form
of an operator acting on a vector space over $\mathbb{C}$ or other algebraically closed fields. These forms use critically the fact that non-constant polynomials have roots.  However, a parallel and beautiful theory also exists when the field is not algebraically closed, and this is not so well-known. One often thinks about the base field as the field of rational numbers and the name ``The rational canonical form'' stick also to completely different fields including $\ftwo$. }
corresponding to any indecomposable singular transformation). This
gives the representation in item 2(b).
\\

\n \emph{Case 2:} $\det (a + \lambda b) = 0$. We will show that this
case cannot arise.  First suppose that there is a copy of
$\Omega^{l} (k)$ in $M$ for some positive integer $l$. Now pick $l$
to be the smallest such integer, then by lemma \ref{lemma2} we know
that $\Omega^l (k)$ is a direct summand of $M$. Since $M$ is
indecomposable, this means $M$ has to be isomorphic to
$\Omega^l(k)$, which is impossible since the latter is odd
dimensional while $M$ was assumed to be even dimensional. So the
upshot is that $M$ does not contain  $\Omega^l(k)$ for any positive
$l$. By lemma \ref{lemma1} this is equivalent to the fact $\det (a +
\lambda b) \ne 0$ in the ring $k[\lambda]$ which contradicts our
hypothesis.

\subsection{Odd dimensional representations} If $M$ is odd dimensional,
then clearly $\dim V_1 \ne \dim V_2$.  We consider the two cases.\\

\n \emph{Case 1:} $\dim V_1 > \dim V_2$. Then there is a non-zero
vector $\omega( \lambda)$ in $ V_1 \otimes_k K[\lambda]$ such that
$(a + \lambda b) (\omega (\lambda)) = 0$ which then implies, by
lemma \ref{lemma1}, the existence of a copy of $\Omega^l(k)$ inside
$M$ for some $l > 0$. Picking $l$ to be minimal, we can conclude
from lemma \ref{lemma2} that $\Omega^l(k)$ is a direct summand of
$M$. Since $M$ is indecomposable, we have $M \cong \Omega^l(k)$.
This gives the modules in item 3(b).\\
\n \emph{Case 2 :} $\dim V_1 < \dim V_2$. Dualising $M = (V_1, V_2;
a , b)$, we get the dual representation   $M^* = (V_2^*, V_1^*; a^*,
b^*)$ which is also indecomposable. Now $\dim V_2^* > \dim V_1^*$,
so by Case(1) we know that $M^* \cong \Omega^l(k)$ for some $l$
positive. Taking duals on both sides and invoking lemma
$\ref{lemma3}$, we get $M \cong \Omega^{-l}(k)$.  This recovers the
modules in item 3(c).

This completes the proof of the classification of the indecomposable
representations of $V_4$.

\section{Some applications}

Having a good classification of the indecomposable representations of a finite group
helps a great deal in answering general module theoretic questions.
In this section, we illustrate this by proving some facts about module over the Klein group.  Note that we don't know of any
direct proofs of the statements below  that  do not depend on the classification of the indecomposable representations.

\subsection{Heller Shifts of the $V_4$-representations.}
We will show how our knowledge of the representations of $V_4$ can
be used to give a homological characterisation of the parity of the
dimensions of the representations. Proofs of the propositions are given in the last section.

\begin{prop} \label{prop:heller} \cite{ring} If $M$ is an even dimensional indecomposable
projective-free representation of $V_4$, then $\Omega(M) \cong M$.

\end{prop}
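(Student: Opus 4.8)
The plan is to use the classification we have just established together with the explicit diagrammatic description of the even dimensional modules and of the Heller shifts of the trivial module. Since $M$ is indecomposable, projective-free and even dimensional, the classification tells us that $M$ is isomorphic to one of the modules in item 2(a) or item 2(b). The strategy is to compute $\Omega(M)$ directly from a minimal projective presentation in each case and recognise the kernel as a module of the same type and dimension, hence (by the uniqueness in the Krull--Remak--Schmidt theorem plus Lemma~\ref{lemma0}) isomorphic to $M$ itself.

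First I would set up the minimal projective cover. For an indecomposable projective-free module $M = [V_1, V_2; a, b]$ of dimension $2n$, we have $\dim V_1 = \dim V_2 = n$ (this follows from the diagrams in 2(a), 2(b), or can be seen from $\dim M/UM = \dim V_1$ and minimality), so the minimal projective cover is $P = (kV_4)^{\,n} \twoheadrightarrow M$, which has dimension $4n$. Therefore $\Omega(M) = \ker(P \to M)$ has dimension $4n - 2n = 2n$, the same as $M$. Moreover $\Omega(M)$ is again projective-free (the kernel of a minimal projective cover never has a projective summand, since $P \to M$ is minimal), so $\Omega(M)$ is a direct sum of indecomposables drawn from our list, all of even dimension summing to $2n$. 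The key remaining point is to pin down \emph{which} one it is.

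Here I would do the computation case by case using the Kronecker-quiver picture. Dualising the defining equations, $\Omega(M)$ is built from the cokernel data of $P \to M$; concretely, writing $P = (kV_4)^n$ and using that $kV_4 = k[a,b]/(a^2,b^2)$ has the "diamond" diagram, the kernel $\Omega(M)$ can be read off as the submodule of $P$ generated by the socle-type relations coming from $a$ and $b$ acting on the top of each copy of $kV_4$ minus the relations already present in $M$. One finds that $\Omega(M)$ has the same top and socle dimensions as $M$: in case 2(a), the associated transformation $b^{-1}a$ on $\Omega(M)/U\Omega(M)$ has the same rational canonical form $f(x)^l$ as that of $M$, because taking Heller shifts does not change the "eigenvalue data" encoded in $\det(a+\lambda b)$; in case 2(b), the kernel-of-$b$ dimension is again $n+1$. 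By Lemma~\ref{lemma0} these invariants determine the isomorphism class, so $\Omega(M) \cong M$.

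The main obstacle I anticipate is making the case-2(a) computation genuinely clean rather than a matrix grind: one must verify that the rational canonical form attached to $\Omega(M)$ is exactly $f(x)^l$ and not, say, its "reciprocal" $x^{\deg}f(1/x)$ or some twist, since Heller shift interacts with the roles of $a$ and $b$. A conceptual way to sidestep the bookkeeping is to observe that $\Omega$ commutes with the correspondence of Proposition~\ref{prop1} up to a well-understood operation on special Kronecker representations, and that for the "regular" (non-singular $\det(a+\lambda b)$) representations this operation is the identity on isomorphism classes; one can see this either from the explicit diagram of $kV_4$ or by noting that $\Omega^2 \cong \id$ on the whole stable category of $V_4$ (since $V_4$ has periodic... no—rather, because $\widehat{H}^*(V_4)$ considerations and the fact that for even-dimensional modules the two shifts coincide with the identity), so that $\Omega(M)$ and $M$ share all the invariants in Lemma~\ref{lemma0}. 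Whichever route is taken, the finish is the same: $\Omega(M)$ is an even-dimensional indecomposable projective-free module with the same distinguishing invariants as $M$, hence $\Omega(M) \cong M$.
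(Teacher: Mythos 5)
Your setup coincides with the paper's: invoke the classification, note that a minimal projective cover of a $2n$-dimensional module from item 2(a) or 2(b) is free of rank $n$, and conclude by a dimension count that $\Omega(M)$ is a projective-free module of dimension $2n$ to be identified. But the identification itself -- the entire content of the proposition -- is missing. In case 2(a) you assert that $b^{-1}a$ on $\Omega(M)$ has the same rational canonical form $f(x)^l$ ``because taking Heller shifts does not change the eigenvalue data,'' which is precisely what has to be proved; you then flag the reciprocal-polynomial worry yourself and never resolve it. Your fallback ``conceptual'' route does not work: $\Omega^2 \not\cong \id$ in the stable category of $V_4$ (the modules $\Omega^n k$ are pairwise non-isomorphic, as the classification itself shows, so $V_4$ is not periodic), and the phrase ``for even-dimensional modules the two shifts coincide with the identity'' is the statement of the proposition, so that argument is circular. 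There is also a smaller unjustified step: you claim $\Omega(M)$ is a sum of \emph{even-dimensional} indecomposables (implicitly, that it is indecomposable); this is true, and fixable since $\Omega$ is inverse to $\Omega^{-1}$ on projective-free modules and hence preserves indecomposability, but you do not say so, and without it the invariants of Lemma~\ref{lemma0} do not by themselves pin down the isomorphism class.

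The paper closes exactly the gap you left open by explicit computation: writing the projective cover $(kV_4)^n \to M$ on the generators $e_i \mapsto g_i$, it exhibits generators of the kernel (for instance, in the $4$-dimensional case of 2(a), the elements $\gamma = a_1 + \theta_0 b_0 + \theta_1 b_1$ and $b_1 + a_0$) and checks diagrammatically that $a(\gamma) = \theta_0 c_0 + \theta_1 c_1$, etc., so the kernel visibly carries the same zig-zag structure with the \emph{same} coefficients $\theta_i$ -- in particular the same polynomial $f(x)^l$, not its reciprocal -- and similarly for case 2(b). That computation (or some equivalent of it) is what your proposal needs; without it, or with the false periodicity claim in its place, the proof does not go through. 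Note also that once the kernel is exhibited explicitly as isomorphic to $M$, the indecomposability of $\Omega(M)$ never needs to be invoked, which is why the paper can avoid the point you glossed over.
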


\begin{cor}  A finite dimensional  projective-free representation $M$ of
$V_4$ is even dimensional if and only if $\Omega(M) \cong M$.
\end{cor}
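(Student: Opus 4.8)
The plan is to reduce to the case of an indecomposable projective-free module and then read the statement off the classification. If $M = \bigoplus_i M_i$ is a decomposition into indecomposable projective-free summands, then, since a minimal projective cover of a direct sum is the direct sum of minimal projective covers, $\Omega(M) \cong \bigoplus_i \Omega(M_i)$; hence by the Krull--Remak--Schmidt theorem the relation $\Omega(M) \cong M$ holds exactly when the family of isomorphism classes $\{\,[\Omega(M_i)]\,\}$ is a permutation of $\{\,[M_i]\,\}$. So it suffices to understand how $\Omega$ moves the isomorphism classes of indecomposable projective-free modules around.

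For the forward implication, Proposition \ref{prop:heller} already tells us that every even-dimensional indecomposable projective-free module is fixed by $\Omega$; applying this to each summand shows $\Omega(M) \cong M$ whenever every indecomposable summand of $M$ is even-dimensional, in particular when $M$ itself is indecomposable and even-dimensional. For the converse, suppose $\Omega(M) \cong M$. By the Classification Theorem together with Lemma \ref{lemma3}, the odd-dimensional indecomposable projective-free modules are precisely the Heller translates $\Omega^n(k)$ with $n \in \ints$: here $\Omega^0(k) = k$ is the trivial representation (item 3(a)), the modules of item 3(b) are the $\Omega^n(k)$ with $n > 0$, and those of item 3(c) are the $\Omega^n(k)$ with $n < 0$; moreover $\Omega$ carries the class of $\Omega^n(k)$ to that of $\Omega^{n+1}(k)$. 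Since Lemma \ref{lemma0} shows that all the modules in the list are pairwise non-isomorphic, the classes $[\Omega^n(k)]$ are distinct, so $\Omega$ acts on this set of odd-dimensional classes as the fixed-point-free shift $n \mapsto n+1$. Consequently a finite family of indecomposable summands that is permuted by $\Omega$ can contain none of these modules, and therefore every $M_i$, and hence $M$, is even-dimensional.

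The argument is essentially immediate once the Classification Theorem and Lemma \ref{lemma3} are available; the only point that requires a little care is the bookkeeping in the decomposable case, where one must combine Krull--Remak--Schmidt with the observation that $\Omega$ merely permutes the isomorphism classes of indecomposables and acts on the odd-dimensional ones without periodic points. I do not expect any serious obstacle beyond this.
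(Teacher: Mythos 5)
Your proof is correct, and at its core it uses the same ingredients as the paper's: Proposition \ref{prop:heller} for the even-dimensional indecomposables, and the classification theorem together with Lemma \ref{lemma3}, which identify the odd-dimensional indecomposable projective-free modules with the syzygies $\Omega^n(k)$, $n \in \ints$, on which $\Omega$ acts as the shift $n \mapsto n+1$. The paper's own proof silently works with $M$ indecomposable (its first sentence, ``we only have to show that if $M$ is an odd dimensional indecomposable then $\Omega(M)\ncong M$'', only makes sense under that reading) and rules out $\Omega(M)\cong M$ in the odd case by a dimension count, $\dim \Omega^n(k) = 2|n|+1$; you rule it out by noting that the shift $n\mapsto n+1$ has no periodic points, which is what lets your argument go through for an arbitrary direct sum via Krull--Remak--Schmidt. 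That extra care is the genuine difference, and it buys something worth recording: for decomposable $M$ the equivalence as literally stated is false in one direction, since $M=\Omega(k)\oplus\Omega(k)$ is even dimensional and projective-free while $\Omega(M)\cong\Omega^{2}(k)\oplus\Omega^{2}(k)\ncong M$. What you actually prove --- and all that is true in general --- is that $\Omega(M)\cong M$ if and only if every indecomposable summand of $M$ is even dimensional; in particular your proof of the implication $\Omega(M)\cong M \Rightarrow M$ even dimensional is valid for arbitrary projective-free $M$, which is more than the paper's proof establishes, while the converse implication of the corollary should be read with $M$ indecomposable (as the paper implicitly does). You noticed this by proving the forward direction only under the hypothesis that all summands are even dimensional; it would strengthen your write-up to state explicitly that this hypothesis cannot be weakened to $M$ being even dimensional.
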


\begin{proof} We only have to show that if $M$ is an odd dimensional
indecomposable then $\Omega(M) \ncong M$. By the classification
theorem, we know that $M$ is isomorphic to $\Omega^l(k)$ for some
integer $l$. Then $\Omega(M) \cong \Omega(\Omega^l(k)) \cong
\Omega^{l+1}(k)$, which is clearly not isomorphic to $M$ just for
dimensional reasons: just note that dimension of $\Omega^{n} (k)$
 is $2n+1$.
 \end{proof}

\subsection{Dual representations of $V_4$} We will use our knowledge
of the representations of $V_4$ to characterise the parity of the
dimension of a representation using the concept of self-duality.

Recall that if $M$ is a finite dimensional representation of a group
$G$, then one can talk about the dual representation $M^* := \Hom(M,
k)$, where a group element $g$ acts on a linear functional $\phi$ by
$(g \cdot \phi)(x) := \phi(x g^{-1})$. A representation of $G$ is
self-dual if it is isomorphic to its dual.

When  $G = V_4$, it is not hard to see that if $M = (V_1, V_2; a,
b)$ is a projective-free representation of $V_4$, then $M^* =
(V_2^*, V_1^*; a^*, b^*)$.

\begin{prop} \label{prop:dual} Even  dimensional indecomposable representations of
$V_4$ are self-dual.
\end{prop}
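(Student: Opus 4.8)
The plan is to exhibit a complete isomorphism invariant for even-dimensional indecomposable $V_4$-modules that is visibly unchanged by dualization, and then to read off the conclusion. Write $M = (V_1, V_2; a, b)$ for an even-dimensional indecomposable, so that $\dim V_1 = \dim V_2 = n$ and $\dim M = 2n$. The invariant I would use is the polynomial $p_M(\lambda) := \det(a + \lambda b) \in k[\lambda]$, considered only up to a nonzero scalar of $k$; this scalar ambiguity is forced, since an isomorphism of Kronecker representations multiplies $a + \lambda b$ on the two sides by invertible matrices over $k$, and hence multiplies $p_M$ by a unit of $k$.

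First I would check, using item (2) of the classification theorem, that $p_M$ (up to scalar) determines $M$ among the even-dimensional indecomposables of dimension $2n$. For the module in item 2(b), $b$ is nilpotent and, after a suitable base change, $a + \lambda b$ is unipotent for every value of $\lambda$, so $p_M$ is a nonzero constant; and there is just one such module in each even dimension. For a module in item 2(a), $b$ is invertible and $b^{-1}a$ is a single rational-canonical block whose characteristic polynomial is a prime power $f(x)^l$ with $l\deg f = n$; hence
\[ p_M(\lambda) \;=\; \det(b)\cdot\det(\lambda I + b^{-1}a) \;=\; \det(b)\cdot f(\lambda)^l, \]
which up to a scalar equals $f(\lambda)^l$ and has degree $n \geq 1$. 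Since distinct pairs $(f,l)$ give distinct polynomials $f(\lambda)^l$ by unique factorization in $k[\lambda]$, this separates the modules of item 2(a) from one another and from the one of item 2(b), which is the claim.

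Next I would take duals. By the remark preceding the proposition, $M^* = (V_2^*, V_1^*; a^*, b^*)$, which in dual bases is the Kronecker representation with matrices $a^T$ and $b^T$ (so $M^*$ too is projective-free); moreover $M^*$ is indecomposable of the same even dimension $2n$, since $M \cong M^{**}$. Therefore
\[ p_{M^*}(\lambda) \;=\; \det\big(a^T + \lambda b^T\big) \;=\; \det\big((a+\lambda b)^T\big) \;=\; \det(a+\lambda b) \;=\; p_M(\lambda), \]
up to a scalar. By the first step, an even-dimensional indecomposable of dimension $2n$ is pinned down by this polynomial, so $M^* \cong M$, which is the assertion of the proposition.

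The step needing the most care — the one I would write out in full — is the first: it invokes the classification theorem and requires computing $p_M$ for the module in item 2(a) correctly, in particular the identity $\det(a+\lambda b) = \det(b)\,\det(\lambda I + b^{-1}a)$ together with the fact that $\det(\lambda I + b^{-1}a)$ is exactly the characteristic polynomial $f(\lambda)^l$ of the single companion block $b^{-1}a$ (in characteristic $2$ there is no sign to track). Everything after that is formal: $\det$ is transpose-invariant, and dualization sends finite-dimensional indecomposables to indecomposables of the same dimension with $M^{**}\cong M$; the argument uses no hypothesis on $k$ beyond characteristic $2$.
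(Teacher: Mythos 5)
Your argument is correct, but it takes a genuinely different route from the paper's. The paper also reduces to the classification theorem, but then writes a module of item 2(a) as $(V,V;I,J)$ with $J$ a single rational canonical block and produces an \emph{explicit} isomorphism $M\cong M^*=(V^*,V^*;I,J^T)$ from the fact that a square matrix is similar to its transpose, $J^T=DJD^{-1}$; the module of item 2(b) is then handled indirectly (self-duality of all the 2(a) modules forces the remaining even-dimensional indecomposable of each dimension to be fixed by duality), together with the standard fact that $kV_4$ is self-dual. You instead promote $\det(a+\lambda b)$, taken up to a nonzero scalar, to a complete isomorphism invariant among the even-dimensional indecomposables of a fixed dimension $2n$ --- the degree-$n$ prime power $f(\lambda)^l$ for item 2(a), a nonzero constant for item 2(b) --- and observe that it is unchanged under transposition, hence under duality; your computations of the invariant on the classified list are correct. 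What your route buys: it treats 2(a) and 2(b) uniformly, it avoids the similarity-to-transpose exercise altogether (a real simplification here, since over a field that is not algebraically closed that exercise is most naturally done with rational canonical forms rather than the Jordan form suggested in the paper's hint), and it visibly uses nothing about $k$ beyond characteristic $2$. What it gives up is the explicit isomorphism $M\cong M^*$ that the paper's commutative square exhibits. One small point you should add: the Kronecker-quiver presentation $(V_1,V_2;a,b)$ applies only to projective-free modules, so the even-dimensional indecomposable $kV_4$ must be disposed of separately; it is self-dual, as the paper notes (for instance because it is the unique $4$-dimensional indecomposable with a free summand, or because group algebras are symmetric).
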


\begin{cor} A non-trivial indecomposable representation of $V_4$ is
even dimensional if and only if it is self-dual.
\end{cor}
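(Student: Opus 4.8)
The plan is to read off the result from the classification theorem just established. The key point is that the duality functor $(-)^{*}$ is an additive anti-equivalence on finite-dimensional $kV_{4}$-modules, so it preserves dimension and indecomposability; moreover, from the explicit formula $M^{*}=(V_{2}^{*},V_{1}^{*};a^{*},b^{*})$ one checks (or uses that $kV_{4}$ is self-dual, or that the ``special'' conditions of Proposition \ref{prop1} are preserved under $(-)^{*}$ via $\Ker(a^{*})=(\Image a)^{\perp}$ and $\Image(a^{*})=(\Ker a)^{\perp}$) that $M^{*}$ is projective-free whenever $M$ is. Hence, for $M$ an even-dimensional indecomposable representation, $M^{*}$ is again an even-dimensional indecomposable, so by the classification theorem it is isomorphic to one of the modules in items (1), 2(a), 2(b). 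It then suffices to show that $M$ and $M^{*}$ carry the same distinguishing invariants as in Lemma \ref{lemma0}.

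If $M$ is the projective module $kV_{4}$, then since $(-)^{*}$ takes projectives to projectives and $kV_{4}$ is the unique indecomposable projective, $M^{*}\cong kV_{4}=M$. So assume $M=(V_{1},V_{2};a,b)$ is non-projective, with $\dim V_{1}=\dim V_{2}=n$; then $M$, and likewise $M^{*}=(V_{2}^{*},V_{1}^{*};a^{*},b^{*})$, lies in family 2(a) or 2(b). The first invariant of Lemma \ref{lemma0} separating those families is $\dim\Ker(b)$, which equals $n$ in case 2(a) and $n+1$ in case 2(b). Since a linear map and its transpose have equal rank, $\dim\Ker(b^{*})=n-\operatorname{rank}(b^{*})=n-\operatorname{rank}(b)=\dim\Ker(b)$, so $M$ and $M^{*}$ lie in the same family. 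If both lie in 2(b) we are done, as for each even dimension that family contains a single module. If both lie in 2(a), the remaining invariant is the rational canonical form of $b^{-1}a$ acting on $M/UM\cong V_{1}$; for $M^{*}$ the corresponding operator is $(b^{*})^{-1}a^{*}=(ab^{-1})^{*}$ acting on $M^{*}/UM^{*}\cong V_{2}^{*}$. Now $ab^{-1}=b\,(b^{-1}a)\,b^{-1}$ is conjugate to $b^{-1}a$, and every endomorphism of a finite-dimensional vector space is conjugate to its transpose, so $(ab^{-1})^{*}$ has the same rational canonical form as $b^{-1}a$. Thus $M$ and $M^{*}$ have identical invariants, and the classification forces $M^{*}\cong M$.

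The only genuine bookkeeping — and the spot to be careful — is correctly identifying, inside the dual quiver representation $M^{*}=(V_{2}^{*},V_{1}^{*};a^{*},b^{*})$, which self-map plays the role of ``$b^{-1}a$'' and on which space, and then confirming it has the expected rational canonical form; this comes down to the two elementary matrix facts that $XY$ and $YX$ are conjugate when one factor is invertible, and that a matrix is conjugate to its transpose. Everything else is immediate from Lemma \ref{lemma0} and the classification theorem. (One may also note in passing that this proposition is consistent with Proposition \ref{prop:heller} via the general identity $\Omega^{-1}(N)^{*}\cong\Omega(N^{*})$.)
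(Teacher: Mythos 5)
There is a genuine gap: you have proved only one of the two implications. Everything in your argument establishes that an even dimensional indecomposable representation is self-dual --- but that is precisely Proposition \ref{prop:dual}, which is already available at this point. (Your route, via the invariants used in Lemma \ref{lemma0} together with the facts that $XY$ and $YX$ are conjugate when one factor is invertible and that a matrix is similar to its transpose, is a perfectly sound alternative to the paper's explicit $(V,V;I,J)$ computation, so that half is fine, if redundant.) What the corollary adds to Proposition \ref{prop:dual} is the converse: a self-dual non-trivial indecomposable must be even dimensional, equivalently, no non-trivial odd dimensional indecomposable is self-dual. Nothing in your proposal addresses this direction, and it is not a formal consequence of the even case --- a priori an odd dimensional indecomposable could also be isomorphic to its dual, and ruling this out is the actual content of the corollary.

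The missing half is short and is exactly what the paper supplies. If $M$ is non-trivial, indecomposable and odd dimensional, the classification theorem gives $M \cong \Omega^{l}(k)$ for some $l \neq 0$; then $M^{*} \cong (\Omega^{l}(k))^{*} \cong \Omega^{-l}(k)$ by Lemma \ref{lemma3}, and $\Omega^{-l}(k) \ncong \Omega^{l}(k)$ because these two modules of dimension $2|l|+1$ lie one in family 3(b) and the other in family 3(c), which Lemma \ref{lemma0} distinguishes by the dimension of the invariant submodule ($|l|$ versus $|l|+1$). Hence $M^{*} \ncong M$, which completes the equivalence. Add this paragraph (and, if you like, replace your even-dimensional argument by a citation of Proposition \ref{prop:dual}) and the proof is complete.
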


\begin{proof} If $M$ is a non-trivial odd dimensional representation
of $V_4$, then we know that $M \cong \Omega^l(k)$ for some $l \ne
0$. Then $M^* \cong (\Omega^l(k))^* \cong \Omega^{-l}(k)$. In
particular, $M^* \ncong M$.

\end{proof}

\section{Proofs}

In this section we give the proofs of the lemmas and propositions
that were used in the classification theorem and applications.

\subsection{Proof of proposition \ref{prop1}}
Let $M$ be  a projective-free $V_4$ module. Then we have
we have $ab(M) = 0$, it follows that $UM$ is included in $U^V_4$. Remarkably one can show that if $M$ is additionally not trivial representation and $M$ is indecomposable then $UM$ is actually equal $M^V_4$, the $V_4$ invariant
submodule of $M$. Consider  short exact sequence of $V_4$ modules
\[ 0 \rar UM \rar M \rar M/UM \rar 0. \]
Let $\pi: M \rar UM$ be a vector space retraction  of the inclusion
$UM \hookrightarrow M$. Define a $V_4$ action on the vector space
$M/UM \oplus M$ as follows:
\begin{align*}
a(x, y) := (0, ax) \\
b(x, y) := (0, by).
\end{align*}
Then it is easy to verify that the map $x \mapsto (x, \pi(x))$
establishes an isomorphism of $V_4$ modules between $M$ and $M/UM
\oplus UM$. Thus $M$ is determined by the vector spaces $M/UM$ and
$UM$ and the linear maps $a, b: M/UM \rar M$. This data amounts to
giving a special representation of $Q$.

In the other direction, suppose $[V_1, V_2; \psi_1, \psi_2]$ is a
special representation of $Q$. Define a $V_4$ action on the vector
space $V_1 \oplus V_2$ by setting $a(x, y):=(0, \psi_1(x))$ and
$b(x, y):= (0, \psi_2(x))$. This is easily shown to be a projective
free $V_4$ module.

We leave it as an exercise to the reader to verify that the recipes
are inverses to each other.

It is also clear that these recipes respect direct sum of
representations. Thus the indecomposables are also in 1-1
correspondence.

\subsection{Proof of lemma \ref{lemma1}}

Suppose $M$ contains a copy of $\Omega^{l}(k)$, for some $l \ge 1$.
\[
\xymatrix@=2em{
    &  \overset{g_0}{\bullet} \ar[dr]_b & & \overset{g_1}{\bullet} \ar[dl]^a \ar[dr] &  & \overset{g_2}{\bullet} \ar[dl]  & \bullet  \hspace{1 mm}
      \bullet \hspace{1 mm} \bullet & \overset{g_{l-1}}{\bullet} \ar[dr]
      & & \overset{g_l}{\bullet} \ar[dl] \\
    &   & \underset{f_0}{\bullet} & & \underset{f_1}{\bullet} &  & &  & \underset{f_{l-1}}{\bullet} &
    }
\]
Define a vector $V(\lambda): = g_0 + g_1 \lambda + g_2 \lambda^2 +
\cdots + g_l  \lambda^l$.  A trivial verification shows that $(a +
\lambda b) (V(\lambda)) = 0$ and therefore $a + \lambda b$ is a
singular transformation as desired.

Conversely, suppose $a + \lambda b$ is singular. Then there is a
non-zero vector $V(\lambda) =  g_0 + g_1 \lambda + g_2 \lambda^2
\cdots g_l \lambda^l$ of smallest degree $l$ in $V_1 \otimes
k[\lambda]$ (so $g_l \ne 0$) such that $(a + b \lambda) (V(\lambda))
= 0$. This means: $a(g_0) = 0$, $b(g_i) = a(g_{i+1})$ for $0 \le i
\le l-1$, and $b(g_l) = 0$. We now argue that these equations give a
copy of $\Omega^l(k)$ inside $M$. To this end, it suffices to show
that the vectors $\{g_0, g_1, g_2, \cdots, g_l \}$ are linearly
independent. As a further reduction, we claim that it suffices to
show that $\{ a(g_1),  a(g_2), \cdots, a(g_l) \}$ are linearly
independent. For, then it will be clear that  $\{ g_1, g_2, \cdots,
g_l \}$is linearly independent, and moreover if $g_0 = \sum_{i=1}^l
c_i \, g_i$, applying $a$ on both sides we get $a(g_0) = 0 = \sum_{i
= 1}^l c_i \, a(g_i)$. Linear independence of $a(g_i)$ forces all
the $c_i = 0$. Thus we will have shown  that $\{g_0, g_1, g_2,
\cdots, g_l \}$ is linearly independent. So it remains to establish
our claim that $\{ a(g_1), a (g_2), \cdots, a (g_l) \}$  is a
linearly independent set. Suppose to the contrary that there is a
non-trivial linear combination of $a(g_i)$'s which is zero: say
$\sum_{i=1}^l \gamma_i \, a (g_i) = 0$ (*). We will get a
contradiction by showing that there is a vector of smaller degree ($
< l$) in $\Ker(a + \lambda b)$. It suffices to produce elements
$(\tilde{g}_i)_{0 \le i \le l-1}$ such that $a(\tilde{g}_0) = 0 $,
$b(\tilde{g_{l-1}}) = 0$, and for $0 \le i \le l-2$, $b(\tilde{g}_i)
= a (\tilde{g_{i+1}})$ ($\diamond$). For then the vector
$\sum_{i=0}^{l-1} \tilde{g}_i \lambda^i$ will be of degree less than
$l$ belonging to the kernel of $a + \lambda b$. To start, we set
$\tilde{g}_0 = \sum_{i=0}^l \gamma_i\, g_i$. The condition $a
(\tilde{g}_0) = 0$ is satisfied by assumption (*). Now define
$\tilde{f_0} := b( \tilde{g}_0) = \sum_{i=1}^l \gamma_i \, b(g_i) =
\sum_{i = 1}^{l-1} \gamma_i \, b(g_i)$ (since $b(g_l) = 0$). Then we
define $\tilde{g}_1 = \sum_{i=0}^{l-1} \gamma_i \, g_{i+1}$ so that
we have the required condition $a(\tilde{g}_1) = b(\tilde{g}_0)$.
Now we simply repeat this process: Inductively we define, for $ 0
\le t \le l-1 $,
\begin{eqnarray*}
\tilde{g}_t &= &\sum_{i=1}^{l-t} \gamma_i \;g_{i+t}, \\
\tilde{f}_t &= & \sum_{i=1}^{l-t} \gamma_i\; b(g_{i+t}).
\end{eqnarray*}
When $t = l-1$, we have $\tilde{g}_{l-1} = \gamma_1 g_l$ and
$\tilde{f}_{l-1} = 0$. So this inductive process terminates at $t=
l-1 (< l) $ and the requirements $(\diamond)$ are satisfied by
construction. Thus we have shown that the vector $\sum_{i=0}^{l-1}
\tilde{g}_{i}\, \lambda^i$ is of smaller degree in the kernel of $a
+ \lambda b$ contradicting the minimality of $l$. Therefore the
vectors $\{a(g_1), a(g_2), \cdots, a(g_l) \}$ should be linearly
independent.  This completes the proof of the first statement in the
lemma. The second statement follows by a straightforward duality
argument.

\subsection{Proof of lemma \ref{lemma2}}

First note that the second part of this lemma follows by dualising the first part; here we also
use the fact that $(\Omega^l\,k)^*\cong \Omega^{-l}\,k$ which will be proved in the next lemma.
So it is enough to prove the first part. Although this lemma is secretly hidden in Benson's
treatment \cite[Theorem 4.3.2]{ben-1}, it is hard very to extract it. So we give a clean proof of this lemma using
almost split sequences, a.k.a Auslander-Reiten sequences. Recall that a short exact sequence
\[ 0 \rar A \stk{f} B \rar C \rar 0\]
of finitely generated modules over a group $G$ is an almost split sequence if it is a non-split
sequence with the property that every  map out of $A$ which is not split
injective factors through $f$. It has been shown in \cite{aus-rei-sma} that
given an finitely generated indecomposable non-projective $kG$-module $C$, there exists a unique
(up to isomorphism of short exact sequences) almost split sequence terminating
in $C$. In particular,  if $G = V_4$ and $C = \Omega^l\,k$, these sequences
are of the form; see \cite[Appendix, p 180]{ben-trends}.
\[0 \rar \Omega^{l+2}\,k \rar \Omega^{l+1}\, k \oplus \Omega^{l+1}\, k \rar \Omega^l\,k \rar 0   \hspace{9 mm}   l \ne -1\]
\[ 0 \rar \Omega^1\,k \rar kV_4 \oplus k \oplus k \rar \Omega^{-1}\,k \rar 0  \hspace{20 mm}\]
To start the proof, let $l$ be the smallest positive integer such that $\Omega^l\,k$ embeds in
a projective-free $V_4$-module $M$. If this embedding does not split, then by the property of an
almost split sequence, it should factor through $\Omega^{l-1}\,k \oplus \Omega^{l-1}\,k$ as shown in the
diagram below.
\[
\xymatrix{ 0 \ar[r] & \Omega^{l}\,k \ar[r] \ar@{^{(}->}[d] & \Omega^{l-1}\,k
\oplus \Omega^{l-1}\,k
\ar[r] \ar@{..>}[dl]^{f \oplus g} \ar[r]& \Omega^{l-2}\,k \ar[r] & 0 \\
& M & & & & }
\]
Now if either $f$ or $g$ is injective, that would contradict the minimality of $l$,
so they cannot be injective. So both $f$ and $g$ should factor through $\Omega^{l-2}\,k
\oplus \Omega^{l-2}\,k$ as shown in the diagrams below.
\[
\xymatrix{ 0 \ar[r] & \Omega^{l-1}\,k \ar[r] \ar[d]_f & \Omega^{l-2}\,k \oplus
\Omega^{l-2}\,k
\ar[r] \ar@{..>}[dl]^{(f_1 \oplus f_2)} \ar[r] & \Omega^{l-3}\,k \ar[r] & 0 \\
& M & & & & }
\]
\[
\xymatrix{ 0 \ar[r] & \Omega^{l-1}\,k \ar[r] \ar[d]_g & \Omega^{l-2}\,k \oplus
\Omega^{l-2}\,k
\ar[r] \ar@{..>}[dl]^{(g_1 \oplus g_2)} \ar[r] & \Omega^{l-3}\,k \ar[r] & 0 \\
& M & & & & }
\]

 Proceeding in this way we can assemble all the lifts obtained
using the almost split sequences into one diagram as shown below.
\[
\xymatrix{
\Omega^l\, k \ar@{^{(}->}[rrrrrr] \ar@{^{(}->}[d] & &&&&& M \\
\Omega^{l-1}\,k \oplus \Omega^{l-1}\,k \ar@{..>}[urrrrrr]  \ar@{^{(}->}[d]& &&&&& \\
(\Omega^{l-2}\,k \oplus \Omega^{l-2}\,k)\oplus(\Omega^{l-2}\,k \oplus
\Omega^{l-2}\,k) \ar@{..>}[uurrrrrr] \ar@{^{(}->}[d] \\
\vdots \ar@{^{(}->}[d] \\
(\Omega^1\, k \oplus \Omega^l\,k)\oplus \cdots \oplus(\Omega^1\, k \oplus
\Omega^l\,k) \ar@{..>}[uuuurrrrrr]  \ar@{^{(}->}[dd]\\
\\
 (kV_4 \oplus k \oplus k) \oplus \cdots \oplus (kV_4 \oplus k \oplus k)
\ar@{..>}[uuuuuurrrrrr]}
\]
So it suffices to show  that for a  projective-free $M$ there cannot exist a
factorisation of the form
\[
\xymatrix{
 \Omega^l\, k \ar@{^{(}->}[r] \ar@{^{(}->}[d] & M \\
 (kV_4)^s \oplus k^t \ar@{.>}[ur]_{\phi} & }
\]
where $l$ is a positive integer. It is not hard to see that the invariance $(\Omega^l\,k)^G$ of $\Omega^l\,k$
maps into $((kV_4)^s)^G$. We will arrive at a contradiction by showing
$((kV_4)^s)^G$ maps to zero under the map $\phi$. Since $((kV_4)^s)^G \cong
((kV_4)^G)^s$ it is enough to show that $\phi$ maps each $(kV_4)^G$ to zero.
$(kV_4)^G$ is a one-dimensional subspace, generated by say $v$. It $v$ maps to
a non-zero element, then it is easy to see that the restriction of $\phi$ on
the corresponding copy of $kV_4$ is injective, but $M$ is projective-free, so
this is impossible. In other words $\phi(v) = 0$ and that completes the proof
of the lemma.

\subsection{Proof of lemma \ref{lemma3}}
Recall that $\Omega^1(k)$ is defined to be the kernel of the
augmentation map $kV_4 \rar k$. Dualising the short exact sequence
\[ 0 \rar \Omega^1(k) \rar kV_4 \rar k \rar 0,\]
we get
\[ 0 \leftarrow \Omega^1(k)^* \leftarrow kV_4 \leftarrow k \leftarrow 0\]
because $kV_4$ and $k$ are self-dual. This shows that $\Omega^{-1}(k
) \cong \Omega^1(k)^*$. Now a straightforward induction gives
$\Omega^{-l}(k ) \cong \Omega^l(k)^*$ for all $l \ge 1$.

So it is enough to prove the part (1) of the lemma because it is not
hard to see that the representations in part (2) are precisely the
duals of those in part (1). We leave this as an easy exercise to the
reader.

As for (1) we will prove the cases $n =1$ and $n=2$. The general
case will then be abundantly clear. For $n=1$, we have to identify
the kernel of the augmentation map $kV_4 \rar k$ which is defined by
mapping the generator $e_0$ of $kV_4$ to the basis  element $g_0$ of
$k$, so the kernel  $\Omega^1(k)$ is a three dimensional
representation as shown in the diagram below
\[
\xymatrix{ \\ 0 \\  } \xymatrix{\\ \longrightarrow \\}
\xymatrix@=2em{                   \\
\overset{a_0}{\bullet} \ar[dr] & & \overset{b_0}{\bullet} \ar[dl] \\
 & \underset{c_0}{\bullet} &
} \xymatrix{\\ \longrightarrow \\} \xymatrix@=2em{
     & \overset{e_0}{\bullet} \ar[dl] \ar[dr] & \\
     \overset{a_0}{\bullet} \ar[dr]&  & \overset{b_0}{\bullet} \ar[dl]\\
     & \underset{c_0}{\bullet} &
    }
\xymatrix{\\ \longrightarrow \\} \xymatrix{\overset{g_0}{\bullet} \\ \\ } \xymatrix{\\
\longrightarrow
\\}
\xymatrix{\\ 0
 \\}
\]
Now consider the case $n=2$. Note the $\Omega^1(k)$ is generated by
two elements $g_0$ and $g_1$. So a minimal projective cover will be
$kV_4 \oplus kV_4$ generated by $e_0$ and $e_1$. The projective
covering maps $e_i$ to $g_i$, $i = 0, 1$. The kernel $\Omega^2(k)$
of this projective covering  will be $5$-dimensional and can be
easily seen in the diagram below.
\[
\xymatrix@=0.5em{ \\ \\  0 \\ \\  } \xymatrix@=0.5em{ \\ \\  \rar \\
\\ }\xymatrix@=0.5em{ & & & & \\ \\
 \overset{a_1}{\bullet} \ar[ddr] &         & \overset{b_1 + a_0}{\bullet} \ar[ddl] \ar[ddr] &   & \overset{b_0}{\bullet} \ar[ddl]
 \\ \\
                 &\underset{c_1}{\bullet} &                         & \underset{c_0}{\bullet} &
} \xymatrix@=0.5em{\\ \\ \rar
\\ \\}
\xymatrix@=0.5em{
     & \overset{e_1}{\bullet} \ar[ddl] \ar[ddr] & \\  \\
     \overset{a_1}{\bullet} \ar[ddr]&  & \overset{b_1}{\bullet}
     \ar[ddl]\\ \\
     & \underset{c_1}{\bullet} &
    }
\xymatrix@=0.5em{\\  \\ \bigoplus \\ \\} \xymatrix@=0.5em{
     & \overset{e_0}{\bullet} \ar[ddl] \ar[ddr] & \\ \\
     \overset{a_0}{\bullet} \ar[ddr]&  & \overset{b_0}{\bullet}
     \ar[ddl]\\ \\
     & \underset{c_0}{\bullet} & }
\xymatrix@=0.5em{\\ \\ \rar \\ \\} \xymatrix@=0.5em{
\overset{g_1}{\bullet} \ar[ddr] & & \overset{g_0}{\bullet} \ar[ddl]
\\ \\
 & \underset{f_0}{\bullet} & \\ \\
} \xymatrix@=0.5em{\\ \\ \rar
\\ \\}
\xymatrix@=0.5em{\\  \\ 0
 \\ \\}
\]
Now it is clear that in general $\Omega^l(k)$ for $l \ge 1$
 will be a $2l+1$ dimensional representation and has the shape of
 the zig-zag diagram
 as shown in the statement of the lemma.

\subsection{Proof of proposition \ref{prop:heller}.}

We begin by showing the modules in item 2(b) are fixed by the Heller shift operator.
Recall that these have the form
\[
     \xymatrix@=2em{
& & \overset{g_{n-1}}{\bullet} \ar[dr]^b \ar[dl]_a & &
\overset{g_{n-2}}{\bullet} \ar[dl] \ar[dr] & & &
     \overset{g_1}{\bullet} \ar[dr] & & \overset{g_0}{\bullet} \ar[dl]   \\
 &     \bullet &  & \bullet & & \bullet & \bullet  \hspace{1 mm}
      \bullet \hspace{1 mm} \bullet & &  \bullet &
}
\]
It is clear that the  $\{g_0, g_1, g_2, \cdots g_{n-1} \}$ is a
minimal generating set for the above module, $M$ say. So a minimal
projective cover of this module will be a free $V_4$-module of rank
$n$ generated by basis elements $\{e_0, e_1, e_2, \cdots ,e_{n-1}
\}$, and the covering map sends $e_i$ to $g_i$, for all $i$.
Counting dimensions, it is then clear that the dimension of the
kernel ($\Omega(M)$) of this projective cover is of dimension $2n$.
We only have to show that the $V_4$-module structure on the kernel
is isomorphic to the one on $M$. This will be clear from the
following diagrams. We consider the cases $n=2$ and $3$, the general
case will then be clear.
\[
\xymatrix{ \\ 0 \\  } \xymatrix{\\ \longrightarrow \\}
\xymatrix@=2em{                   \\
                 & \overset{b_0}{\bullet} \ar[dl] \\
  \underset{c_0}{\bullet} &
} \xymatrix{\\ \longrightarrow \\} \xymatrix@=2em{
     & \overset{e_0}{\bullet} \ar[dl] \ar[dr] & \\
     \overset{a_0}{\bullet} \ar[dr]&  & \overset{b_0}{\bullet} \ar[dl]\\
     & \underset{c_0}{\bullet} &
    }
\xymatrix{\\ \longrightarrow \\} \xymatrix{& \overset{g_0}{\bullet} \ar[dl] \\ \underset{f_0}\bullet & \\ } \xymatrix{\\
\longrightarrow
\\}
\xymatrix{\\ 0
 \\}
\]
\[
\xymatrix@=0.5em{ \\ \\  0 \\ \\  } \xymatrix@=0.5em{ \\ \\  \rar \\
\\ }\xymatrix@=0.5em{  & & & \\ \\
          & \overset{b_1 + a_0}{\bullet} \ar[ddl] \ar[ddr] &   & \overset{b_0}{\bullet} \ar[ddl]
 \\ \\
                 \underset{c_1}{\bullet} &                         & \underset{c_0}{\bullet} &
} \xymatrix@=0.5em{\\ \\ \rar
\\ \\}
\xymatrix@=0.5em{
     & \overset{e_1}{\bullet} \ar[ddl] \ar[ddr] & \\  \\
     \overset{a_1}{\bullet} \ar[ddr]&  & \overset{b_1}{\bullet}
     \ar[ddl]\\ \\
     & \underset{c_1}{\bullet} &
    }
\xymatrix@=0.5em{\\  \\ \bigoplus \\ \\} \xymatrix@=0.5em{
     & \overset{e_0}{\bullet} \ar[ddl] \ar[ddr] & \\ \\
     \overset{a_0}{\bullet} \ar[ddr]&  & \overset{b_0}{\bullet}
     \ar[ddl]\\ \\
     & \underset{c_0}{\bullet} & }
\xymatrix@=0.5em{\\ \\ \rar \\ \\} \xymatrix@=0.5em{ &
\overset{g_1}{\bullet} \ar[ddl] \ar[ddr] & & \overset{g_0}{\bullet}
\ar[ddl]
\\ \\
 \underset{f_1}{\bullet} & & \underset{f_0}{\bullet} & \\ \\
} \xymatrix@=0.5em{\\ \\ \rar
\\ \\}
\xymatrix@=0.5em{\\  \\ 0
 \\ \\}
\]
We now show that the modules in item 2(a) are fixed under the
Heller. Recall that in each even dimension $2n$, these modules
correspond to indecomposable rational canonical forms given by
powers of an irreducible polynomials $f(x)^l = \sum_{i=0}^n \theta_i
x^i$, schematically:
\[
\xymatrix@=1.9em{
      \overset{g_{n-1}}{\bullet} \ar[dr]_b & & \overset{g_{n-2}}{\bullet}\ar[dl]^a \ar[dr] &  & \overset{g_{n-3}}{\bullet} \ar[dl]  & \bullet  \hspace{1 mm}
      \bullet \hspace{1 mm} \bullet
      & & \overset{g_0}{\bullet} \ar[dl] \ar[dr] \\
      & \underset{f_{n-1}}{\bullet} & & \underset{f_{n-2}}{\bullet} &  & &  \underset{f_1}{\bullet} & &
      \underset{f_0}{\bullet}
    }
\]
where $a(g_{n-1}) = \sum_{i = 0}^{n-1} \theta_i f_i$. It is again
clear that  $\{g_0, g_1, g_2, \cdots g_{n-1} \}$ is a minimal
generating set, and hence a projective cover can be taken to be a
free $V_4$-module of rank $n$ with basis elements $\{e_0, e_1, e_2,
\cdots e_{n-1} \}$, and the mapping sends the elements $e_i$ to the
generators $g_i$. We will again convince the reader that these
modules are fixed under the Heller by examining the cases $n=1$ and
$n=2$. We begin with the case $n=1$. Here the rational canonical
form is determined by constant $\theta_0$, and $a(g_0) = \theta_0
f_0$. The following diagram shows that the Heller fixes these two
dimensional modules.

\[
\xymatrix{ \\ 0 \\  } \xymatrix{\\ \longrightarrow \\}
\xymatrix@=2em{                   \\
                  \overset{a_0 + \theta_0 b_0}{\bullet} \ar[dr] & \\
  & \underset{c_0}{\bullet}
} \xymatrix{\\ \longrightarrow \\} \xymatrix@=2em{
     & \overset{e_0}{\bullet} \ar[dl] \ar[dr] & \\
     \overset{a_0}{\bullet} \ar[dr]&  & \overset{b_0}{\bullet} \ar[dl]\\
     & \underset{c_0}{\bullet} &
    }
\xymatrix{\\ \longrightarrow \\} \xymatrix{ \overset{g_0}{\bullet} \ar[dr] & \\ & \underset{f_0}\bullet  \\ } \xymatrix{\\
\longrightarrow
\\}
\xymatrix{\\ 0
 \\}
\]
Now consider the four dimensional modules:  $n=2$ and the rational
conical form corresponds to a polynomial $x^2 + \theta_1 x +
\theta_0$. In the diagram below $a(g_1) = \theta_0 f_0  + \theta_1
f_1.$
\[
\xymatrix@=0.5em{ \\ \\  0 \\ \\  } \xymatrix@=0.5em{ \\ \\  \rar \\
\\ }
\xymatrix@=0.5em{  & & & \\ \\
           \overset{\gamma}{\bullet} \ar[ddr] &   & \overset{b_1 + a_0}{\bullet}
\ar[ddl] \ar[ddr] &
 \\ \\
 &                 \underset{c_1}{\bullet} &   & \underset{c_0}{\bullet}
} \xymatrix@=0.5em{\\ \\ \rar
\\ \\}
\xymatrix@=0.5em{
     & \overset{e_1}{\bullet} \ar[ddl] \ar[ddr] & \\  \\
     \overset{a_1}{\bullet} \ar[ddr]&  & \overset{b_1}{\bullet}
     \ar[ddl]\\ \\
     & \underset{c_1}{\bullet} &
    }
\xymatrix@=0.5em{\\  \\ \bigoplus \\ \\} \xymatrix@=0.5em{
     & \overset{e_0}{\bullet} \ar[ddl] \ar[ddr] & \\ \\
     \overset{a_0}{\bullet} \ar[ddr]&  & \overset{b_0}{\bullet}
     \ar[ddl]\\ \\
     & \underset{c_0}{\bullet} & }
\xymatrix@=0.5em{\\ \\ \rar \\ \\} \xymatrix@=0.5em{
\overset{g_1}{\bullet} \ar[ddr] & & \overset{g_0}{\bullet} \ar[ddr]
\ar[ddl] &
\\ \\
 & \underset{f_1}{\bullet} & & \underset{f_0}{\bullet}  \\ \\
} \xymatrix@=0.5em{\\ \\ \rar
\\ \\}
\xymatrix@=0.5em{\\  \\ 0
 \\ \\}
\]
where $\gamma = a_1 + \theta_0 b_0 + \theta_1 b_1$. Note that
$a(\gamma) = \theta_0 c_0 + \theta_1 c_1$, as desired.

\subsection{Proof of proposition \ref{prop:dual}.}
Note that it suffices to show that the indecomposable
representations in item 2(a) are self-dual; for that forces the
representations in item 2(b) to be self-dual, and it is well known
that $kV_4$ is self-dual.

A $2n$ dimensional representation $M$ of item 2(a) can be chosen to
be of the form (after a suitable choice of bases)
\[ M = (V, V; I, J)\]
where $V$ is an $n$-dimensional vector space, $I$ denotes the
identity transformation, and $J$ an indecomposable rational
canonical form. It is then clear that the dual of $M$ is given by
\[ M^* = (V^*, V^*; I , J^T )\]
It is a interesting exercise \footnote{Hint: Use Jordan decomposition} to show that a square matrix is similar to its transpose, so
there exists an invertible matrix $D$ such that $J^T = D J D^{-1}$.
The following commutative diagram then tells us that $M$ is
isomorphic to $M^*$.
\[
\xymatrix{
  V \ar[r]^J \ar[d]_D^{\cong}  & V  \ar[d]^D_{\cong} \\
  V^* \ar[r]_{J^T} &   V^*  }
\]

\section{The quest continues}

In our paper we concentrated on Klein group but what about $C_{3} \oplus C_{3}$? What are all the representation of this group?  Interestingly enough this is an extremely difficult question. Yet, some progress has be made very recently which involves more sophisticated machinery of representation theory. For the curious reader we refer to a recent paper \cite{CFP}.


\bibliographystyle{plain}
\bibliography{lit}

\end{document}